\newcommand{\red}[0]{\operatorname{red}}
\newcommand{\dlt}[0]{\operatorname{dlt}}
\newcommand{\DMR}{{\mathcal{DMR}}}
\newcommand{\DR}{{\mathcal{DR}}}
\newcommand{\D}{{\mathcal{D}}}
\newcommand{\mult}{{\rm mult}}
\newcommand{\ord}{{\rm ord}}
\newcommand{\Link}{{\rm Link}}
\newcommand{\lct}{{\rm lct}}
\newcommand{\vol}{{\rm vol}}
\newcommand{\Val}{{\rm Val}}
\newcommand{\fa}{{\mathfrak{a}}}
\newcommand{\hvol}{\widehat{\rm vol}}
\newcommand{\Cal}{\mathcal}
\newcommand{\bb}{\mathbb}
\newtheorem{thm}{Theorem}[section]
\newtheorem{lem}[thm]{Lemma}
\newtheorem{cor}[thm]{Corollary}
\newtheorem{conj}[thm]{Conjecture}
\theoremstyle{definition}
\newtheorem{defn}[thm]{Definition}
\newtheorem{exmp}[thm]{Example}
\newtheorem{ques}[thm]{Question}    %!!!!!!!!!!!!!!!!!!!!
\newtheorem{rem}[thm]{Remark}
\newtheorem{defn-thm}[thm]{Definition--Theorem}  %!!!!!!!!!!!!!!!!!!!!!!!!
\newtheorem{defn-prop}[thm]{Definition--Proposition}  %!!!!!!!!!!!!!!!!!!!!!!!!
\newtheorem{defn-lem}[thm]{Definition--Lemma}  %!!!!!!!!!!!!!!!!!!!!!!!!
\theoremstyle{remark}
\begin{document}
\title{Interaction between singularity theory and the minimal model program }

\author {Chenyang Xu}

\address   {Beijing International Center for Mathematical Research,
       Beijing 100871, China}
\email     {cyxu@math.pku.edu.cn}
\subjclass[2010]{14J07, 14E30, 14J45}
\begin{abstract} 
%This is the note for the author's talk in Session `Algebraic and Complex Geometry' in 2018 ICM. 
We survey some recent topics on singularities, with a focus on their connection to the minimal model program. This includes the construction and properties of dual complexes, the proof of the ACC conjecture for log canonical thresholds and  the recent progress on the `local stability theory'  of an arbitrary Kawamata log terminal singularity. 
\end{abstract}

\date{\today}

\maketitle{}
\section{Introduction}
Through out this paper,  we will consider algebraic singularities in characteristic 0.
It is well known that even if we are mostly interested in smooth varieties, for many different reasons, we have to deal with singular varieties. For the minimal model program (MMP) (also known as Mori's program), the reason is straightforward, mildly singular varieties are built into the MMP  process, and there is no good way to avoid them (see e.g. \cite{KM98}). In fact, the development of the MMP has been intertwined with the progress of our understanding of the corresponding singularity theory,  in particular for the classes of singularities preserved by an MMP sequence. This is one of the main reasons why when the theory was started around four decades ago, people spent a lot of time to classify these singularities. However, once we move beyond dimension three, an explicit characterisation of these singularities is often too complicated, and we have to search for a more intrinsic and qualitative method. It turns out that MMP theory itself provides many new tools for the study of singularities. In this note, we will survey some recent progress along these lines. More precisely, we will discuss the construction and properties of dual complexes, the proof of the ACC conjecture for log canonical thresholds, and the recently developed concept of `local stability theory' of an arbitrary Kawamata log terminal singularity. We hope these different aspects will give the reader an insight to the modern philosophy of studying singularities from the MMP viewpoint.

\medskip

In the rest of the introduction, we will give a very short account on some of the main ideas. Given a singularity $x\in X$ in characteristic 0, the first birational model that one probably thinks of is a smooth one given by Hironaka's theorem on resolution of singularities. However, started from dimension three, there are often too many possible resolutions and examples clearly suggest that in a general case, an `optimal resolution' does not exist.  By the philosophy of MMP, we should run a sequence of relative MMP, which allows us to start from a general birational model over $x\in X$, e.g., an arbitrary resolution, and produce a sequence of relative birational models. The output of this MMP is a birational model, which usually is mildly singular but still equipped with many desirable properties. Furthermore, since during the MMP process,  each step is  a simple surgery like a divisorial contraction or a flip, we can keep track of many properties of the models and use this information to answer questions. As an example, in Section \ref{s-dualcomplex}, we will consider the construction of a CW-complex as a topological invariant for an isolated singularity $x\in X$ with $K_X$ being $\mathbb{Q}$-Cartier, namely the dual complex of a minimal resolution denoted by $\DMR(x\in X)$.

A possibly more profound principle is that there is a local-to-global analogue between different types of singularities and the building blocks of varieties. More precisely, the MMP can be considered as a process to transform and decompose an arbitrary projective variety into three types,  which respectively have  positive (Fano), zero (Calabi-Yau) or negative (KSBA) first Chern class. These three classes are naturally viewed as building blocks for higher dimensional varieties. As a local counterpart, we consider normal singularities whose canonical class is $\bb Q$-Cartier. There is a closely related  trichotomy: the minimal log discrepancy is larger, equal or smaller than 0. In fact, guided by the local to global principle, we are able to discover  striking new results on singularities. In Section \ref{s-acc}, we will focus on the proof of Shokurov's ACC conjecture on log canonical thresholds, which is achieved via an intensive interplay between local and global geometry.  In Section \ref{s-klt}, we will investigate in a new perspective on Kawamata log terminal (klt) singularities which are precisely the singularities with positive log discrepancies and form the local analog of Fano varieties.  We will explain some deep insights on klt singularities inspired by advances in the study of Fano varieties. More precisely, for Fano varieties, we have the notion of K-(semi,poly)stability  which has a differential geometry origin, as it is expected to characterise the existence of a K\"ahler-Einstein metric. 
For  klt singularities, the local to global principle leads us to discover a (conjectural) stability theory, packaged in  the {\it Stable Degeneration Conjecture}  \ref{conj-local}, which can be considered as a local analogue to the K-stability for Fano varieties.

\medskip

 \noindent {\it Reference:} Giving a comprehensive account of the relation between the singularity theory and the MMP is far beyond the scope of this note.  The singularity theory in the MMP is extensively discussed in the book \cite{Kol13}. Ever since Koll\'ar's book was published,  many different aspects of singularity theory have significantly evolved, and important new results have been established.
   
 \bigskip  
 \noindent {\bf Acknowledgement:}   I want to thank my coauthors, especially Christopher Hacon,  J\'anos Koll\'ar, Chi Li and James M$^{\rm c}$Kernan for many discussions and joint works on the mathematical materials surveyed in this note. I am grateful to Christopher Hacon and J\'anos Koll\'ar for long lists of valuable comments. CX is partially supported by `The National Science Fund for Distinguished Young Scholars (11425101)'.
   
 \section{Dual complex}\label{s-dualcomplex}
 There are many standard references in the subject of MMP, see e.g. \cite{KM98}. Here we recall some basic definitions. 
 Given a normal variety $X$ and a $\mathbb{Q}$-divisor $\Delta$ whose coefficients along prime components are contained in $\mathbb{Q}\cap [0,1]$, we call $(X,\Delta)$ a {\it $\bb Q$-Cartier log pair} if $K_X+\Delta$ is $\mathbb{Q}$-Cartier, e.g. there is some positive integer $N$ such that $N(K_X+\Delta)$ is Cartier. For a divisorial valuation $E$ whose centre on $X$ is non-empty, we can assume there is a birational model $f\colon Y\to X$, such that $E$ is a divisor on $Y$. Then we can define the {\it discrepancy} $a(E,X,\Delta)$ for a $\mathbb{Q}$-Cartier log pair $(X,\Delta)$ to be the multiplicity of 
 $$K_{Y/X}+f^*\Delta=K_Y-f^*(K_X+\Delta)$$ along $E$. This is a rational number of the form $\frac{p}{N}$ for some integer $p$. For many questions, it is more natural to look at the {\it log discrepancy} 
 $A(E,X,\Delta)=a(E,X,\Delta)+1$, which is also denoted by $A_{X,\Delta}(E)$ in the literature.  We say that $(X,\Delta)$ is {\it log canonical} (resp. {\it Kawamata log terminal} ({\it klt})) if $A(E,X,\Delta)\ge 0$ (resp. $A(E,X,\Delta)> 0$) for all divisorial valuations $E$ whose centre ${\rm Center}_X(E)$ on $X$ is non-empty.  There is another important class called {\it divisorial log terminal} ({\it dlt}) sitting in between: a log pair $(X,\Delta)$ is dlt if there is a smooth open locus $U\subset X$, such that $\Delta_U=_{\rm defn} \Delta|_U$ is a reduced divisor satisfying $(U,\Delta_U)$ is simple normal crossing, and any divisor $E$ with the centre ${\rm Center}_X(E)\subset X\setminus U$ satisfies $A(E,X,\Delta)>0$. 
 The main property for the discrepancy function is that $a(E,X,\Delta)$ monotonically increases under a MMP sequence, which implies that the MMP will preserve the classes of singularities defined above (cf. \cite[3.42-3.44]{KM98}).

We call a projective variety $X$ to be {\it a $\mathbb{Q}$-Fano variety} if $X$ only has klt singularities and $-K_X$ is ample. Similarly, a projective pair $(X,\Delta)$ is called {\it a log Fano pair} if $(X,\Delta)$ is klt and $-K_X-\Delta$ is ample.
 
 \subsection{Dual complex as PL-homeomorphism invariant} 
 For a simple normal crossing variety $E$, it is natural to consider how the components intersect with each other. This combinatorial data is captured by the dual complex $\mathcal{D}(E)$ (see Definition \ref{d-dual}). A typical example one can keep in mind is the dual graph $\mathcal{D}(E)$ for a resolution $(Y,E)\to X$ of a normal surface singularity, where the exceptional curve $E$ is assumed to be of simple normal crossings. This invariant  is indispensable for the study of surface singularities (see e.g. \cite{Mum61}). %The research for $X$ of arbitrary dimension only starts recently. 
 Nevertheless, the concept of dual complex can be defined in a more general context. 
  \begin{defn}[Dual Complex]\label{d-dual}
  Let $E=\bigcup_{i\in I}E_i$ be a pure dimensional scheme with
irreducible components $E_i$. Assume that
\begin{enumerate}
\item each $E_i$ is normal  and
\item for every $J\subset I$, if $\cap_{i\in J}E_i$ is nonempty, then every connected component
of  $\cap_{i\in J}E_i$ is  irreducible and has codimension $|J|-1$ in $E$. 
\end{enumerate}
Note that  assumption (2) implies the following.
\begin{enumerate}\setcounter{enumi}{2}
\item For every $j\in J$, 
every irreducible component
of $\cap_{i\in J}E_i$ is contained in a 
unique irreducible component
of $\cap_{i\in J\setminus\{j\}}E_i$.
\end{enumerate}
The {\it dual complex} $\D(E)$ of $E$ is the regular cell complex obtained
as follows.
The vertices are the irreducible components of $E$ and
to each irreducible component
of $W\subset \cap_{i\in J}E_i$ we associate a  cell of dimension $|J|-1$.
This cell is usually denoted by $v_W$. The attaching map is given by condition (3).
Note that $\D(E)$ is a  simplicial complex
iff $\cap_{i\in J}E_i$ is irreducible  (or empty)
for every $J\subset I$. 
 \end{defn}

 Fixed a dlt pair $(X,\Delta)$, the reduced part $E=_{\rm defn}\Delta^{=1}$ of $\Delta$ satisfies the assumptions in Definition \ref{d-dual} (see e.g. \cite[Section 4.2]{Kol13}), thus we can define $\D(X,\Delta)=_{\rm defn}\D(E)$. Clearly, by the definition of $U$ in the definition of dlt singularity $(X,\Delta)$,  we can pick any such $U$,  then $\D(X,\Delta)=\mathcal{D}(\Delta|_U)$. Furthermore, if two dlt pairs $(X,\Delta)$ and $(X',\Delta')$ are crepant birationally equivalent, i.e., the pull backs of $K_X+\Delta$ and $K_{X'}+\Delta'$ to a common model are the same, then applying the weak factorisation theorem to log resolutions of $(X,\Delta)$ and $(X',\Delta')$ and carefully tracking the dual complex given by divisors with log discrepancy 0 on each birational model, we can show that $\mathcal{D}(X,\Delta)$ and $\mathcal{D}(X',\Delta')$ are PL-homeomorphic (see \cite[11]{dFKX}). 

 Given a sequence of MMP
 $$(X_1,\Delta_1)\dasharrow (X_2,\Delta_2)\dasharrow \cdots \dasharrow (X_k,\Delta_k) ,$$
 as the log discrepancies of $(X_i,\Delta_i)$ monotonically increase, we have 
 $$\mathcal{D}(X_1,\Delta_1)\supset \mathcal{D}(X_2,\Delta_2)\supset\cdots \supset \mathcal{D}(X_k,\Delta_k).$$ 
 
 \begin{rem}Although part of the MMP, including the abundance conjecture, remains to be conjectural, all the MMP results we need in this note are already proved in \cite{BCHM10} and its extensions, e.g. \cite{HX13}.
 \end{rem}
 
 We know the following technical but useful criterion. 
 \begin{lem}[{\cite[19]{dFKX}}]\label{l-htp}
 If  for  a step  $(X_i,\Delta_i)\dasharrow (X_{i+1},\Delta_{i+1})$ of an MMP sequence, the extremal ray  $R_i$ satisfies that $R_i\cdot D_i>0$ for  a component $D_i$ of $\Delta^{=1}_{i}$, then $\mathcal{D}(X_i,\Delta_i)\supset \mathcal{D}(X_{i+1},\Delta_{i+1})$ is a homotopy equivalence. 
\end{lem}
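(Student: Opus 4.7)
The plan is to show the inclusion $\mathcal{D}(X_{i+1},\Delta_{i+1})\hookrightarrow \mathcal{D}(X_i,\Delta_i)$ is a homotopy equivalence by inductively collapsing the vanishing cells via dlt adjunction to $D_i$. First, I would realise both dual complexes as subcomplexes of the dual complex on a common log resolution $Y\to X_i, X_{i+1}$: letting $\Delta_Y^{(j)}$ ($j=i,i+1$) denote the sum of strict transforms of $\Delta_j^{=1}$ together with all exceptional divisors of log discrepancy $0$ with respect to $(X_j,\Delta_j)$, the PL-homeomorphism invariance recalled above gives $\mathcal{D}(X_j,\Delta_j)\simeq \mathcal{D}(\Delta_Y^{(j)})$. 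Since log discrepancies weakly increase along a $(K+\Delta)$-negative MMP step, one has $\Delta_Y^{(i+1)}\subset\Delta_Y^{(i)}$ as divisors, and a cell in $\mathcal{D}(\Delta_Y^{(i)})\setminus \mathcal{D}(\Delta_Y^{(i+1)})$ must involve a divisor $F$ with $A(F,X_i,\Delta_i)=0<A(F,X_{i+1},\Delta_{i+1})$; the centre of any such $F$ on $X_i$ lies inside the exceptional locus $\mathrm{Ex}(\pi_i)$.

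Next, I would establish the homotopy equivalence by induction on $\dim X_i$ using adjunction. The hypothesis $R_i\cdot D_i>0$ ensures $D_i$ is not contracted, so its strict transform $D_i'$ is a component of $\Delta_{i+1}^{=1}$ with a vertex $v_{D_i'}$; by adjunction of $(K_{X_i}+\Delta_i)\cdot R_i<0$ to $D_i$, the induced birational map $\pi_i|_{D_i}\colon D_i\dasharrow D_i'$ is $(K_{D_i}+\mathrm{Diff}_{D_i}(\Delta_i-D_i))$-negative on its contracted locus. By dlt adjunction the links $\mathrm{Lk}(v_{D_i})\subset \mathcal{D}(X_i,\Delta_i)$ and $\mathrm{Lk}(v_{D_i'})\subset \mathcal{D}(X_{i+1},\Delta_{i+1})$ are identified with $\mathcal{D}(D_i,\mathrm{Diff}_{D_i}(\Delta_i-D_i))$ and $\mathcal{D}(D_i',\mathrm{Diff}_{D_i'}(\Delta_{i+1}-D_i'))$ respectively, and induction yields a homotopy equivalence between them. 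Since closed stars are contractible cones on their links, this link equivalence extends to a deformation retraction $\overline{\mathrm{St}}(v_{D_i})\to \overline{\mathrm{St}}(v_{D_i'})$, and combined with the identity on the unchanged complement produces the desired global deformation retract.

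The main obstacle is the combinatorial confinement claim: every vanishing cell of $\mathcal{D}(\Delta_Y^{(i)})$ lies inside $\overline{\mathrm{St}}(v_{D_i})$. Translating the numerical positivity $R_i\cdot D_i>0$ into this adjacency requires a careful analysis of log canonical strata inside $\mathrm{Ex}(\pi_i)$, distinguishing divisorial contractions (where the contracted divisor $E_0$ satisfies $E_0\cdot R_i<0$ while $D_i\cdot R_i>0$, forcing $E_0\cap D_i\neq \emptyset$ and joining $v_{E_0}$ to $v_{D_i}$) from small flips (where the flipping locus has codimension $\ge 2$ and one must argue through higher-codimensional strata attached to $D_i$ by auxiliary boundary components). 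An additional subtlety is that the restricted MMP step on $D_i$ need not itself satisfy an analogous positivity hypothesis, so the inductive statement must either be strengthened or accompanied by a separate compatibility check at each dimension.
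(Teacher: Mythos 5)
The survey paper does not itself give a proof of this lemma; it is cited to \cite[19]{dFKX}, so I can only compare your argument against the original. Your proposal identifies the right geometric input (realising both complexes on a common resolution, noting the log discrepancies weakly increase, and noting that $R_i\cdot D_i>0$ forces the non-isomorphism locus to meet $D_i$), but the central step is not actually carried out, and the inductive strategy around it is flawed. The adjunction-based induction on $\dim X_i$ does not close: as you yourself observe in the final sentence, the birational map $D_i\dasharrow D_i'$ induced by the MMP step need not contract an extremal ray on which some component of $\mathrm{Diff}_{D_i}(\Delta_i-D_i)^{=1}$ is positive, so the inductive hypothesis is not available at the next level down. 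Identifying the links $\mathrm{Lk}(v_{D_i})$ and $\mathrm{Lk}(v_{D_i'})$ with the dual complexes of the different pairs is correct, but without a verified hypothesis the induction is a wish, not a proof. The more fundamental issue is that the homotopy statement does not really reduce to a comparison of the two stars $\overline{\mathrm{St}}(v_{D_i})$ and $\overline{\mathrm{St}}(v_{D_i'})$; you need to know that \emph{every} cell in $\mathcal D(X_i,\Delta_i)\setminus\mathcal D(X_{i+1},\Delta_{i+1})$ lies in the open star of $v_{D_i}$, i.e.\ that every stratum whose cell dies or changes actually \emph{contains} a stratum of $D_i$, not merely meets $D_i$. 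You flag this as ``the main obstacle,'' and it is; your proposal does not supply the argument.

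The actual route in \cite{dFKX} bypasses adjunction and induction on dimension entirely. It combines (i) a purely combinatorial collapsing lemma for regular CW complexes: if $T'\subset T$ and there is a vertex $v\in T'$ such that every cell of $T\setminus T'$ is a face of a cell of $T\setminus T'$ containing $v$, then the inclusion $T'\hookrightarrow T$ is a deformation retract; with (ii) the geometric assertion that in the present situation this holds for $v=v_{D_i}$. Point (ii) follows from the hypothesis $R_i\cdot D_i>0$ because any stratum $W$ whose cell is affected is covered by curves in $R_i$, $D_i$ is positive on all of those, and $D_i$ being a component of $\Delta_i^{=1}$ not contracted means $W\cap D_i$ is a nonempty union of log canonical centres, giving the required larger-dimensional cells joined to $v_{D_i}$. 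That one-step argument is both cleaner and avoids the circularity of your induction. To repair your proposal you would need to replace the adjunction induction with a direct proof of confinement of the vanishing cells to the star of $v_{D_i}$ and then invoke the collapsing lemma, rather than compare links and hope the complement is untouched.
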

 
 Now we can apply this to various geometric situations.  We first consider the application to the study of a singularity $x\in X\subset \mathbb{C}^N$. It has been known for long time (see \cite{Mil68}) that all local topological information of $x\in X$ is encoded in the link defined as 
 $$\Link(x\in X)=_{\rm defn}X \cap B_{\epsilon}(x)$$ for a sufficiently small radius $\epsilon$. Following the strategy of studying surfaces (as in e.g. \cite{Mum61}), we pick a log resolution $Y \to (x\in X)$ and let $E=_{\rm defn}f^{-1}(x))$ (in particular, $E$ is simple normal crossing). Then $\Link(x\in X)$ is a tubular neighbourhood of $E$ and $\D(E)$ contains some key information of this tubular structure. 
  
 \begin{exmp}\label{e-ADE}
 Consider the well known classification of rational double points (or Du Val singularities) on surface:
 \begin{enumerate}
 \item Type $A_n$: $x^2+y^2+z^{n+1}=0.$
 \item Type $D_n$: $x^2+zy^2+z^{n-1}=0$ $(n\ge 4) $.
\item Type $E_6$:  $ x^{2}+y^{3}+z^{4}=0$.
\item Type $E_7$: $x^{2}+y(y^{2}+z^{3})=0$
\item Type $E_8$:  $x^{2}+y^{3}+z^{5}=0$.
 \end{enumerate}
 Then the minimal resolution $Y$ with the exceptional locus $E$ forms a log resolution, and $\D(E)$ is the graph underlying the corresponding Dynkin diagram. 
 \end{exmp}
 
 Using the weak factorisation theorem \cite{AKMW02}, one shows that the homotopy class of $\mathcal{D}(E)$ is a well-defined homotopy invariant $\DR(x\in X)$ which does not depend on the choice of the log resolution $(Y,E)$ (see e.g. \cite{Pay13}). %Furthermore, it is of the same homotopy class for a natural defined Berkovich space. 
 The strategy in our previous discussion then can be used to show the following result.  
 \begin{thm}\label{t-mmpdual} For  an isolated normal singularity $x\in X$ with $K_X$ being $\bb Q$-Cartier, we can define a canonical PL-homeomorphism invariant $\DMR(x\in X)$ which has the homotopy class of $\DR(x\in X)$.
 \end{thm}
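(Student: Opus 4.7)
The plan is to produce a canonical relative minimal model $(Y^{\mathrm m},\Delta^{\mathrm m})$ over $X$ from any log resolution by running a relative MMP with a carefully chosen boundary, to set $\DMR(x\in X):=\D(Y^{\mathrm m},\Delta^{\mathrm m})$, and then to use (a) the PL-invariance of dual complexes under crepant birational equivalence (recalled right before Lemma \ref{l-htp}) for canonicity, and (b) Lemma \ref{l-htp} at every MMP step for the homotopy identification with $\DR(x\in X)$.

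First I would fix a log resolution $f\colon Y\to X$ with snc reduced exceptional divisor $E=\sum_i E_i=f^{-1}(x)_{\mathrm{red}}$ and assemble the boundary $\Delta=\sum_i\delta_iE_i$ with $\delta_i=\min\{1,\,1-A_i\}$, where $A_i=A(E_i,X,0)$. Then $(Y,\Delta)$ is dlt, $\Delta^{=1}$ consists precisely of the components with $A_i\le 0$, and (in the lc case)
\[
K_Y+\Delta\;=\;f^{\ast}K_X+\sum_{A_i\ge 1}(A_i-1)\,E_i,
\]
so the relative discrepancy is effective and $f$-exceptional; in the non-lc case the coefficients must be refined to stay in $[0,1]$, but the recipe is the same. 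Running the relative $(K_Y+\Delta)$-MMP over $X$ (available by \cite{BCHM10} and \cite{HX13}) produces a minimal model $g\colon(Y^{\mathrm m},\Delta^{\mathrm m})\to X$ on which the extra exceptional part has been killed; since only components with coefficient $<1$ can be contracted, every component of $\Delta^{=1}$ survives, and one defines $\DMR(x\in X):=\D(Y^{\mathrm m},\Delta^{\mathrm m})$.

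Canonicity follows because any two such outputs are crepant birationally equivalent dlt pairs over $X$: applying the weak factorisation theorem \cite{AKMW02} to a common log resolution and tracking the divisors of log discrepancy zero, as in \cite[11]{dFKX}, yields a canonical PL-homeomorphism between the two complexes. For the homotopy comparison with $\DR(x\in X)$, one first shows that the inclusion $\D(\Delta^{=1})\hookrightarrow\D(E)$ is a homotopy equivalence (the extra vertices, corresponding to the $E_j$ with $A_j\ge 1$, can be successively blown down without altering the homotopy type of the ambient dual complex), and then that each elementary MMP step $(Y_i,\Delta_i)\dasharrow(Y_{i+1},\Delta_{i+1})$ meets the hypothesis of Lemma \ref{l-htp}: the contracted extremal ray $R_i$ satisfies $R_i\cdot(K_{Y_i}+\Delta_i)<0$ while $R_i\cdot g_i^{\ast}K_X=0$ over $X$, so its generator sits inside some $E_j$ with $A_j\ge 1$, and the inherited snc structure forces $R_i$ to meet a neighbouring component of $\Delta^{=1}$ positively. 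Concatenating the resulting homotopy equivalences gives $\D(E)\simeq\D(Y,\Delta)\simeq\D(Y^{\mathrm m},\Delta^{\mathrm m})$, as required.

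The hard part will be this last verification: in the potentially non-lc setting one must arrange $\Delta$ so that $\D(\Delta^{=1})$ really reflects the full homotopy type of $\D(E)$, and simultaneously ensure that every extremal ray of the MMP intersects a neighbouring component of $\Delta^{=1}$. In the lc case this is essentially the content of \cite[Theorem 19]{dFKX}; in general it requires delicate control of the snc combinatorics of the exceptional divisor as the MMP proceeds, which is the technical heart of the construction.
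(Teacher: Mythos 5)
Your construction diverges from the paper's at the very first step, and this divergence creates a genuine gap. The paper runs the relative MMP for the pair $(Y,E)$ where $E=f^{-1}(x)_{\rm red}$ has \emph{every} exceptional component with coefficient $1$; consequently $\Delta^{=1}=E$ from the outset, $\D(Y,E)=\D(E)$ is the full dual complex, and Lemma~\ref{l-htp} is invoked only to compare successive MMP steps. You instead assemble a boundary $\Delta=\sum_i\delta_iE_i$ with $\delta_i=\min\{1,1-A_i\}$ (truncated), so that $\Delta^{=1}$ consists solely of the components with $A_i\le 0$. You then need a separate assertion --- that the inclusion $\D(\Delta^{=1})\hookrightarrow\D(E)$ is a homotopy equivalence --- and this assertion is false in general. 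Already for a klt singularity ($A_i>0$ for all $E_i$) your $\Delta^{=1}$ is empty, so $\D(\Delta^{=1})=\varnothing$, while $\D(E)$ is the dual complex of a genuine log resolution and is nonempty; an empty complex is not homotopy equivalent to a nonempty one. More generally there is no topological mechanism by which discarding the components with $A_i>0$ from the dual complex preserves its homotopy type, and the parenthetical justification you offer (``the extra vertices \ldots can be successively blown down without altering the homotopy type'') is precisely the content that would need to be proved, not assumed; it is not a consequence of any result recalled in the paper. Lemma~\ref{l-htp} cannot rescue this, because it compares $\D(\Delta^{=1}_i)$ to $\D(\Delta^{=1}_{i+1})$ along the MMP, never $\D(\Delta^{=1})$ to $\D(E)$ on a fixed model.

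A secondary problem is in your verification of the hypothesis of Lemma~\ref{l-htp}. From $R_i\cdot(K_{Y_i}+\Delta_i)<0$ and $R_i\cdot g_i^*K_X=0$ one concludes the contracted curve lies in the support of the (effective, $g_i$-exceptional) divisor $K_{Y_i}+\Delta_i-g_i^*K_X$, i.e.\ inside some component with $A_j> 1$; but there is no reason such a curve must meet a component of $\Delta^{=1}_i$ positively, since with your boundary those components can be entirely disjoint from the locus with $A_j>1$. With the paper's boundary this issue does not arise in the same form: every exceptional divisor is in $\Delta^{=1}$, and the contracted curve, being supported in the fibre over $x$, always sits inside the support of $\Delta^{=1}$, which is where the argument in \cite[19]{dFKX} actually operates. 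The fix is simply to use the paper's boundary $E$ with all coefficients equal to $1$, define $\DMR(x\in X):=\D(\Delta^{\rm dlt})$ for the resulting dlt modification, and drop the spurious comparison $\D(\Delta^{=1})\simeq\D(E)$ entirely.
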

 \begin{proof} First we take a log resolution $(Y,E)\to X$ which is isomorphic outside $X\setminus \{x\}$, then run a relative MMP of $(Y,E)$ over $X$. The output $(X^{\dlt},\Delta^{\dlt})$ is called {\it a dlt modification} of $x\in X$. Then we define a regular complex 
 $$\DMR(x\in X)=_{\rm defn}\D(\Delta^{\dlt}).$$ Since different dlt modifications are crepant birationally equivalent to each other, we know $\DMR(x\in X)$ gives a well defined PL-homeomorphism class by the discussion before. Furthermore, we can deduce from Lemma \ref{l-htp} that for all the birational models appearing in steps of the relative MMP, including the last one $(X^{\dlt},\Delta^{\dlt})$, the dual complexes have the same homotopy type. Then it implies $\DMR(x\in X)$ is homotopy equivalent to $\DR(x\in X)$. 
 \end{proof}
The regular complex $\DMR(x\in X)$ can be considered as a geometric realisation of the weight 0 part of the Hodge theoretic invariant attached to $x\in X$. An interesting corollary to Theorem \ref{t-mmpdual} is that if we consider a klt singularity $x\in X$, then $\DR(x\in X)$ is contractible, as in the special case of Example \ref{e-ADE}. 
%Another example of Theorem \ref{t-mmpdual} is in the proof of the surprising fact that isolated normal singularities could have any arbitrary finitely presented group as its local fundamental group, which is essentially contributed by the fundamental group of the dual complex \cite{KK,K}.
\medskip

Another natural context in which the dual complex appears is for the motivic zeta function using the  log resolution formula (cf. \cite[Section 3]{DL01}). The techniques developed here can be used to show that the only possible maximal order  pole of the motivic zeta function  is the negative of the log canonical threshold, which was a conjecture by Veys (see \cite{NX16a}).

 \subsection{Dual complex of log Calabi-Yau pairs}
 
 Similar ideas can be applied when we consider the setting of a proper degeneration $Y\to C$ of projective varieties over a smooth pointed curve $(C,0)$. Here we consider the dual complex $\mathcal{D}(Y^{\rm red}_0)$ where $Y_0^{\rm red}$ is the reduced fiber over $0$ and assume  $(Y, Y_0^{\rm red})$ is a dlt pair.  
 
 For subjects like mirror symmetry,  the degeneration of Calabi-Yau varieties is of particular interests. From a birational geometry view, if we consider a family $\pi\colon Y\to C$, and assume a general fiber has $K_{Y_t}\sim_{\mathbb{Q}} 0$, then after running an MMP over $C$ (cf. \cite{Fuj11}), we end up with a model $Y$ which satisfies $K_Y+Y_0^{\rm red}\sim_{\mathbb{Q}}0$. Then for such models with this extra condition,  any two of them are crepant birationally equivalent which implies $\D({Y}_0^{\red})$ is well defined up to PL-homeomorphism. 
 
Indeed in this case, the topological invariant  $\D({Y}_0^{\red})$ is first defined by Kontsevich-Soibelman as the `essential skeleton' of the Berkovich theoretic non-archimedean analytification $Y^{\rm an}$ (see \cite{KS01,MN15, NX16}), and it plays an important role in the study of the algebro-geometric version of the SYZ conjecture (cf. \cite{SYZ96, KS01, GS11} etc.). The same argument for proving Theorem \ref{t-mmpdual}  can be used to show the essential skeleton $\D({Y}_0^{\red})$ is homotopy equivalent to $Y^{\rm an}$ (see \cite{NX16}).
 
 \bigskip
 
To understand $\D({Y}_0^{\red})$, we first need to describe its local structure, i.e., for a prime component $X\subset Y_0$, to understand the link of the corresponding vertex $v_{X}$ in $\D({Y}_0^{\red})$. It is given by $\D(\Delta^{=1})$ where $\Delta$ is defined by formula 
$$(K_Y+Y^{\red}_0)|_X=K_X+\Delta\sim_{\mathbb{Q}}0.$$ Our goal is to show under suitable conditions, the dual complex coming from a (log) Calabi-Yau variety is close to simple objects like a sphere or a disc.  In fact, we can show the following.
 
 \begin{thm}[\cite{KX16}]\label{t-logCY}
 Let $(X,\Delta)$ be a projective dlt pair which satisfies $K_X+\Delta\sim_{\mathbb{Q}}0$. Assume $\dim(\D(\Delta^{=1}))>1$, then the following holds.
 \begin{enumerate}
  \item  $H^i(\D(\Delta^{=1}),\mathbb{Q})=0$ for $1\le i \le \dim(\D(\Delta^{=1},\mathbb{Q}))$.
 \item $\D(\Delta^{=1})$ is a pseudo-manifold with boundary (\cite{KK10}).
 \item There is a natural surjection $\pi_1(X^{\rm sm})\twoheadrightarrow \pi_1(\D(\Delta^{=1}) )$.
 \item The profinite completion $\hat{\pi}_1(\D(\Delta^{=1}))$ is finite. 
 \end{enumerate}
 \end{thm}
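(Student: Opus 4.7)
My approach is to run an MMP on a small perturbation of $(X,\Delta)$ to reduce to a Mori fibration while keeping $|\D(\Delta^{=1})|$ homotopy invariant, and then to read off the four properties from the fibration structure using dlt adjunction together with known finiteness results for \'etale fundamental groups.

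\medskip

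Assume $\Delta^{=1}\neq 0$, since otherwise there is nothing to prove. For $0<\epsilon\ll 1$, the pair $(X,\Delta-\epsilon\Delta^{=1})$ is dlt and
$$K_X+\Delta-\epsilon\Delta^{=1}\sim_{\mathbb{Q}}-\epsilon\Delta^{=1}$$
is not pseudo-effective. Thus a $(K_X+\Delta-\epsilon\Delta^{=1})$-MMP terminates with a Mori fibration $f\colon X'\to Z'$. At every step the contracted extremal ray $R_i$ satisfies $R_i\cdot(-\epsilon\Delta^{=1})<0$, so some component of the reduced boundary meets $R_i$ positively; by Lemma \ref{l-htp} the dual complex is preserved up to homotopy equivalence throughout. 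Moreover $(\Delta')^{=1}$ is $f$-positive on $X'$, so some component $S\subset(\Delta')^{=1}$ dominates $Z'$. It therefore suffices to verify (1)--(4) for $(X',\Delta')$.

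\medskip

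For (1) and (2) I would induct on $\dim X$. Dlt adjunction gives $(K_{X'}+\Delta')|_S=K_S+\Delta_S$, producing a lower-dimensional projective dlt log Calabi--Yau pair whose dual complex $\D(\Delta_S^{=1})$ is identified with the link of the vertex $v_S$ in $\D((\Delta')^{=1})$. Assembling such links over the strata mapping to $Z'$ via $f$, the complex $\D((\Delta')^{=1})$ is realised as a union of open stars of lower-dimensional dual complexes. A Mayer--Vietoris argument feeds the inductive hypothesis into the asserted cohomology vanishing (1), while the pseudo-manifold-with-boundary property (2) is a local check at codimension-two log canonical centres: dlt adjunction forces each such centre to be either a smooth or a nodal curve germ, so every codimension-one face of $\D((\Delta')^{=1})$ lies in at most two maximal faces. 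The base cases are intervals and low-dimensional spheres, which are immediate.

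\medskip

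For (3) I would choose a log resolution $\pi\colon(Y,E)\to(X,\Delta)$ that is an isomorphism over the simple normal crossing locus of $(X,\Delta^{=1})$. A regular neighbourhood $T$ of the relevant reduced snc divisor deformation retracts onto $|\D(\Delta^{=1})|$, while its complement maps into $X^{\sm}$ away from codimension two; a van Kampen argument then yields the surjection $\pi_1(X^{\sm})\twoheadrightarrow\pi_1(\D(\Delta^{=1}))$. For (4), note that $(X,\Delta^{<1})$ is klt and $-(K_X+\Delta^{<1})\sim_{\mathbb{Q}}\Delta^{=1}$ is effective, so $(X,\Delta^{<1})$ is of log Fano type. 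By the known finiteness of the \'etale fundamental group of the smooth locus of a klt log Fano pair, $\hat{\pi}_1(X^{\sm})$ is finite, and (3) then gives (4).

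\medskip

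The main obstacle will be (4): the input that $\hat{\pi}_1(X^{\sm})$ is finite for klt log Fano type pairs is a deep boundedness-type theorem, not a formal consequence of the MMP set-up above. Item (1) is also delicate, because the Mori fibration $f$ does not literally fibre $\D((\Delta')^{=1})$; the inductive Mayer--Vietoris step needs a careful organisation of the strata over $Z'$ to control the range in which cohomology actually vanishes.
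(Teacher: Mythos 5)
Your opening MMP step is in the right spirit, and your treatment of (4) matches the paper exactly: $(X,\Delta^{<1})$ is of log Fano type, so $\hat{\pi}_1(X^{\rm sm})$ is finite by \cite{Xu14}, and combining with (3) gives (4). (A side note: that finiteness is not primarily a boundedness theorem; it is proved in \cite{Xu14} via MMP and a construction of Koll\'ar components, not via BAB-type bounds.) For (1) the paper takes a different route in the sketch — it handles the maximal-dimension case directly by Hodge theory rather than by Mayer--Vietoris induction over a stratified fibration — and for (2) the paper relies on \cite{KK10}; your local codimension-two analysis reflects the underlying idea but is much less precise.

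The genuine gap is in (3). The paper's key move, which you do not have, is to run a carefully chosen MMP (not merely the small perturbation to a Mori fibration) so that on the new model $X'$ the support of the crepant boundary $\Delta'$ \emph{contains an ample divisor}; the singular Lefschetz hyperplane theorem then produces the surjection $\pi_1(X'^{\rm sm})\twoheadrightarrow \pi_1((\Delta')^{=1})$, which is then composed with the natural surjection onto the fundamental group of the dual complex. Your replacement is a van Kampen argument on a regular neighbourhood $T$ of a reduced snc divisor in a log resolution. This does not work as stated: a tubular neighbourhood of an snc divisor $E$ deformation retracts onto $E$, not onto $\D(E)$, so the first deformation retraction you assert is false; and van Kampen applied to $T$ together with its complement in $Y$ or $X^{\rm sm}$ does not yield $\pi_1(X^{\rm sm})\twoheadrightarrow \pi_1(E)$, which is what is really needed — that surjection is precisely a Lefschetz-type statement and requires positivity of $E$. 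This is why the ampleness input from the special MMP in \cite[Section 6]{KX16} is essential and cannot be replaced by a purely topological argument.
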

 \begin{proof} We sketch the proof under the extra assumption that $\dim(\D(\Delta^{=1}))$ is maximal, i.e. equal to $n-1$.
Then (1) is easily obtained using Hodge theory. We need to apply MMP theory to show (2) and (3). Here we explain the argument for (3). A carefully chosen MMP process (see \cite[Section 6]{KX16}) allows us to change the model from $X$ to a birational model $X'$, with the property that if we define the effective $\mathbb{Q}$-divisor $\Delta'$ on $X'$ to be the one such that $(X,\Delta)$ and $(X',\Delta')$ are crepant birationally equivalent, then the support of $\Delta'$ contains an ample divisor. From this we conclude that
$$\pi_1(X'^{\rm sm})\twoheadrightarrow  \pi_1(\Delta'^{=1})\twoheadrightarrow  \pi_1(\D(\Delta^{=1})),$$
where the first surjection follows from the (singular version of) Lefschetz Hyperplane Theorem.
We also have $\pi_1(X^{\rm sm})\to \pi_1(X'^{\rm sm})$ by tracking the MMP process, which concludes (3).   Finally, (4) follows from \cite{Xu14}.
(We note here that we indeed expect $\pi_1(\D(\Delta^{=1}))$ is finite, but to apply the above argument, we need $\pi_1(X^{\rm sm})$ for the underlying variety $X$ of a log Fano pair. For now, we only know its pro-finite completion is finite.) 
\end{proof}
A remaining challenging  question is to understand the torsion cohomological  group $H^i(\D(\Delta^{=1}),\mathbb{Z})$ for a dlt log Calabi-Yau pair $(X,\Delta)$. %Combining with Theorem \ref{t-logCY} and Poincar\'e Conjecture, the vanishing of such groups will be the key to affirmatively answer the question of whether the dual complex $\D(X,\Delta)$  is always a sphere, where $(X,\Delta)$ is a simple normal crossing reduced log Calabi-Yau pair with zero dimension log canonical centres.  Raised in \cite{KS01},  a similar question asks whether for a maximal  degeneration of a family of strict Calabi-Yau manifolds $X_t$ whose special fiber $X_0$ is of simple normal crossing  and has a trivial relative canonical class,  the dual complex $\D(X_0)$ is homeomorphic to the sphere $\bb S^{\dim(X_t)}$. We confirm the first question up to $\dim(X)=5$ and the second one up to $\dim(X_t)=4$ in \cite{KX16}.
  
 \section{ACC of log canonical thresholds}\label{s-acc}
 
 Given a holomorphic function $f$ with $f(0)=0$, the complex singular index
 $$c(f)=\sup\{c\  |\  \frac{1}{|f|^c} \mbox{ is locally $L^2$-integrable at $0$}\}$$
introduced by Arnold is a fundamental invariant, which appears in many contexts (see \cite[II. Chap. 13]{AGV85}). In a more general setting, in birational geometry, this invariant is interpreted as the {\it log canonical threshold} of an effective $\mathbb{Q}$-divisor $D$ with respect to a log pair $(X,\Delta)$ 
 $$\lct(X,\Delta; D)=\max\{t\ |\  (X,\Delta+tD) \mbox{ is log canonical}\}.$$
 Using a log resolution, it is not hard to show that when $X$ is the local germ $0\in \mathbb{C}^n$, $\Delta=0$ and $D=(f)$, $\lct(X;D)=c(f)$. 
 
 \begin{exmp}Let $X=\mathbb{C}^n$, $f=x_1^{m_1}+\cdots+ x_n^{m_n}$, then by \cite[8.15]{Kol95}
 $$\lct(\mathbb{C}^n,f)=\min\{1, \sum^n_{i=1}\frac{1}{m_i}\}.$$
 For a fixed $n$, all such numbers form an infinite set which satisfies the ascending chain condition.
 \end{exmp}
 
See \cite[Section 8-10]{Kol95} for a wonderful survey, including relations with other branches of mathematics.  
 
 We define the following set.
 \begin{defn} Fix the dimension $n$ and two sets of positive numbers $I$ and $J$, we denote by ${\rm LCT}_n(I,J)$  the set consisting of all numbers $\lct(X,\Delta;D)$ such that $\dim(X)=n$, the coefficients of $\Delta$ are in $I$ and the coefficients of $D$ are in $J$.
\end{defn} 
Our main contribution to the study of log canonical thresholds is showing the following  theorem.
\begin{thm}[{\cite[Theorem 1.1]{HMX14}}, ACC Conjecture for log canonical thresholds]\label{t-ACC}
 If $I$ and $J$ satisfy the descending chain condition (DCC), then ${\rm LCT}_n(I,J)$ satisfies the ascending chain condition (ACC). 
 \end{thm}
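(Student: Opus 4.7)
The plan is to argue by contradiction and by induction on the dimension $n$, proving ACC for lct in tandem with its global counterpart (ACC for the coefficients of lc pairs $(X,\Delta)$ satisfying $K_X+\Delta\equiv 0$), which feeds the argument at each inductive step. Suppose there is a strictly increasing sequence $t_i=\lct(X_i,\Delta_i;D_i)\nearrow t_\infty$ with data of dimension $n$ and coefficients in the fixed DCC sets $I$ and $J$. Each pair $(X_i,\Delta_i+t_iD_i)$ is then lc but not klt. Passing to a suitable dlt modification $f_i\colon Y_i\to X_i$, I extract a prime divisor $E_i\subset Y_i$ with log discrepancy exactly zero, and write
\[K_{Y_i}+E_i+\Gamma_i=f_i^*(K_{X_i}+\Delta_i+t_iD_i),\]
so the coefficients of $\Gamma_i$ lie in the DCC set $I\cup t_iJ$ (uniformly on a tail where $t_i\le t_\infty$).

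Next I apply adjunction to $E_i$: $(K_{Y_i}+E_i+\Gamma_i)|_{E_i}=K_{E_i}+\Theta_i$, where the coefficients of $\Theta_i$ have the shape $\frac{m-1+\sum a_jc_j}{m}$ with the $c_j$ running over coefficients of $\Gamma_i$ meeting $E_i$ and $m$ a ramification index. The key point is that the terms linear in $t_i$ enter strictly monotonically: since $E_i$ is created as an lc place exactly at $t=t_i$, it must actually see $D_i$, so a strictly increasing sequence of $t_i$'s produces strictly increasing $\Theta_i$-coefficients. Because $f_i^*(K_{X_i}+\Delta_i+t_iD_i)$ is numerically trivial relative to $X_i$, the restriction $K_{E_i}+\Theta_i$ is numerically trivial over $f_i(E_i)$; after passing to a general fiber (or arranging $E_i$ to contract onto a point by a careful choice of dlt modification and relative MMP) I obtain an $(n-1)$-dimensional lc pair $(F,\Theta_F)$ with $K_F+\Theta_F\equiv 0$ whose coefficients lie in a DCC set depending only on $I$, $J$, and $t_\infty$. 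The inductive Global ACC then leaves only finitely many admissible coefficient vectors, contradicting the strict monotonicity.

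The principal obstacle is that Global ACC in dimension $n-1$ is no easier than ACC for lct, and the two must be established in tandem with a third ingredient, DCC of volumes for klt pairs of general type. Global ACC requires log birational boundedness of lc pairs with $K_X+\Delta\sim_{\mathbb{Q}}0$ and sufficiently positive boundary, which is obtained from DCC of volumes; the DCC of volumes in turn rests on the MMP of \cite{BCHM10} together with a detailed limiting analysis of families of log canonical models. There is also a combinatorial verification that the adjunction coefficients produced by the different stay inside a prescribed DCC set under iterated restriction. The intertwining of these three statements across dimensions is what makes \cite[Theorem 1.1]{HMX14} a substantial theorem rather than a formal consequence of adjunction.
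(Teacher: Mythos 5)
Your proposal follows essentially the same route as the paper's sketch: argue by contradiction against a strictly increasing sequence of thresholds, extract an lc place $E_i$ via a birational model, use adjunction to produce a lower-dimensional lc Calabi--Yau pair whose coefficients lie in a DCC set and strictly increase, contradict the global ACC statement (Theorem~\ref{t-gacc}), and observe that the global ACC must in turn be proved in tandem with the DCC of volumes and (effective birational) boundedness of log general type pairs (Theorem~\ref{t-bound}). The only cosmetic difference is that you invoke a dlt modification where the paper uses the plt extraction of a single lc place with $\operatorname{Ex}(f)=E$ (via \cite[1.4.3]{BCHM10}); both serve the same purpose, and your discussion of the strict monotonicity of the adjunction coefficients and of the $D(I)$ bookkeeping is at the same level of rigor as the paper's own account.
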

 
 In such a generality, this was conjectured in \cite{Sho}, although in a lot of earlier works, questions of a similar flavour already appeared. For $X=\mathbb{C}^n$ (or even more generally for bounded singularities) and  $D=(f)$, this was solved by \cite{dFEM} using a different approach. In fact, while our proof is via global geometry, the argument in \cite{dFEM} uses a more local method. 

\medskip
To understand our strategy, we start with a well-known construction:
 Given a log canonical pair $(X,\Delta)$ with a prime divisor $E$ over $X$ with the log discrepancy $A(E,X,\Delta)=0$, if $X$ admits a boundary $\Delta'$ such that $(X,\Delta')$ is klt, then applying the MMP we can construct a model $f\colon Y\to X$ such that ${\rm Ex}(f)$ is equal to the divisor $E$ (see \cite[1.4.3]{BCHM10}). Denote by $\Delta_Y=E+f_*^{-1}\Delta$ and restrict $K_Y+\Delta_Y$ to a general fiber $F$ of $f\colon E\to f(E)$. Since $E$ has coefficient one in $\Delta_Y$, the adjunction formula says there is a boundary $\Delta_F$ such that 
 $$K_F+\Delta_F=(K_{Y}+\Delta_Y)|_F=f^*(K_X+\Delta)\sim_{\mathbb{Q}}0.$$
 In other words, using the model $Y$ constructed by an MMP technique, from a lc pair $(X,\Delta)$ which is not klt along a subvariety $f(E)$, we obtain a log Calabi-Yau pair $(F,\Delta_F)$ of smaller dimensional. 
 
  We note that even in the case $\Delta_Y=E$, since $Y$ could be singular along codimension 2 points on $E$, it is not always the case that $\Delta_E=0$. Nevertheless, if the coefficients of $\Delta$ are in a set $I\subset [0,1]$, then the coefficients of $\Delta_F$ are always in the set 
 $$D(I)=_{\rm defn}\{\frac{n-1+a}{n} \ |\ n\in \mathbb{N}, a=\sum^j_{i=1} a_i \mbox{ where }a_i\in I\}\cap [0,1] $$
 (see e.g. \cite[3.45]{Kol13}).
 In particular, if $I$ satisfies the DCC, then $D(I)$ satisfies the DCC. This is why we work with such a general setting of coefficients as it works better with the induction. 
 
 Moreover, if there is a sequence of pairs  $(X_i,\Delta_i)$ and strictly increasing log canonical thresholds $t_i$ with respect to the divisors $D_i$, then the above construction will produce a sequence of log Calabi-Yau varieties $(F_i, \Delta_{F_i})$ corresponding to $(X_i,\Delta_i+t_iD_i)$ with the property that the restriction of $f_*^{-1}(\Delta_i+t_iD_i)$ on $F_i$ yields components of $\Delta_{F_i}$ with strictly increasing coefficients as $i\to \infty$. Therefore, to get a contradiction,  it suffices to prove the following global version of the ACC conjecture.
 
 \begin{thm}[{\cite[Theorem 1.5]{HMX14}}]\label{t-gacc}
 Fix $n$ and a DCC set $I$, then there exists a finite set $I_0\subset I$ such that for any projective $n$-dimensional log canonical Calabi-Yau pair $(X,\Delta)$, i.e. $K_X+\Delta\sim_{\mathbb{Q}}0$, with the coefficients of $\Delta$ contained in $I$, it indeeds holds that the coefficients of $\Delta$ are in $I_0$.
 \end{thm}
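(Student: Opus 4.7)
The plan is to proceed by induction on $n=\dim X$, with the inductive step moving a hypothetical counter-example from dimension $n$ down to dimension $n-1$ via the MMP/adjunction construction recalled in the paragraphs immediately before the statement. For the base case $n=1$, $X=\mathbb{P}^1$ and $\deg\Delta=2$; since $I$ is DCC, $\epsilon:=\inf I>0$, so the number of prime components of $\Delta$ is at most $\lfloor 2/\epsilon\rfloor$, and DCC on $I$ together with the relation $\sum_j a_j=2$ allows only finitely many coefficient tuples.

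For the inductive step, assume the theorem in all dimensions $<n$; by the reduction explained just before the statement, this also furnishes the ACC for log canonical thresholds (Theorem \ref{t-ACC}) in dimension $n$. Suppose for contradiction that there is a sequence of $n$-dimensional projective lc log Calabi-Yau pairs $(X_i,\Delta_i=\sum_j a_{ij}D_{ij})$ with $a_{ij}\in I$, $K_{X_i}+\Delta_i\sim_{\mathbb{Q}}0$, and the coefficient $a_{i,1}$ of a distinguished prime $D_{i,1}$ strictly increasing in $i$, with limit $a_{\infty,1}\leq 1$. After replacing each pair by a $\mathbb{Q}$-factorial dlt modification, I may assume each $(X_i,\Delta_i)$ is dlt with coefficients in the DCC set $I\cup\{1\}$. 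The aim is to build from $(X_i,\Delta_i)$ a sequence of $(n-1)$-dimensional projective lc log Calabi-Yau pairs $(F_i,\Delta_{F_i})$ whose coefficients lie in a DCC set of the form $D(I')$ and whose coefficient tuples contradict the inductive hypothesis.

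The construction proceeds as follows. Compute $t_i:=\lct(X_i,\Delta_i-a_{i,1}D_{i,1};D_{i,1})\in[a_{i,1},1]$; by the ACC for lct just established, together with DCC of $\{a_{i,1}\}$, pass to a subsequence on which $t_i$ is constant, equal to some $t\in[a_{\infty,1},1]$. The pair $(X_i,\Delta_i-a_{i,1}D_{i,1}+tD_{i,1})$ is lc with a divisorial valuation $E_i$ of log discrepancy zero. Extract $E_i$ by the BCHM-style MMP recalled before the statement to obtain a birational model $f_i\colon Y_i\to X_i$ with ${\rm Ex}(f_i)=E_i$; restricting the $\mathbb{Q}$-trivial divisor $K_{Y_i}+E_i+f_{i,*}^{-1}(\Delta_i-a_{i,1}D_{i,1})+t\,f_{i,*}^{-1}D_{i,1}$ to a general fibre $F_i$ of $f_i|_{E_i}\to f_i(E_i)$ and invoking adjunction yields a pair $(F_i,\Delta_{F_i})$ with $K_{F_i}+\Delta_{F_i}\sim_{\mathbb{Q}}0$ and coefficients in the DCC set $D(I\cup\{t\})$. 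When $t<1$, the coefficient of $D_{i,1}|_{F_i}$ in $\Delta_{F_i}$, computed by the different formula from $a_{i,1}$ (together with the singularities of $Y_i$ along $E_i$), inherits the strict monotonicity of $a_{i,1}$, and the inductive hypothesis applied to $(F_i,\Delta_{F_i})$ delivers the contradiction.

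The principal obstacle is the boundary case $t=1$, in which $E_i$ coincides with $D_{i,1}$: adjunction along $D_{i,1}$ does not directly record the strict monotonicity of $a_{i,1}$, since the coefficient of $D_{i,1}$ has been pushed all the way to $1$. One must then argue that the global $\mathbb{Q}$-triviality $K_{X_i}+\Delta_i\sim_{\mathbb{Q}}0$ forces the strictness of $a_{i,1}$ to propagate to a strictly monotone coefficient on some other component of $\Delta_{F_i}$ — so that adjunction still yields a sequence violating the $(n-1)$-dimensional hypothesis — and that this propagation is robust under the MMP surgeries used to extract $E_i$ and under the passage through the different formula $D(\cdot)$. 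Implementing this coefficient bookkeeping in parallel with the ACC for lct, and verifying that the enlarged coefficient sets remain DCC at every stage of the induction, is the technical heart of the argument in \cite{HMX14}.
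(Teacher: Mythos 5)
Your proposed proof proceeds by a pure dimensional induction on Theorem~\ref{t-gacc}, reducing CY pairs in dimension $n$ to CY pairs in dimension $n-1$ by an adjunction construction. This is genuinely different from the paper's argument, and as written it has a gap at the central step where strict monotonicity of a coefficient must be transferred to the lower-dimensional pair.

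Concretely: after you pass to a subsequence so that $t_i=\lct(X_i,\Delta_i-a_{i,1}D_{i,1};D_{i,1})$ is constant, equal to $t$, the divisor $f_{i,*}^{-1}D_{i,1}$ entering the adjunction on $Y_i$ carries coefficient $t$ --- not the strictly increasing $a_{i,1}$. The adjunction (different formula) along $E_i$ is therefore computed from the constant value $t$ together with the singularities of $Y_i$ along $E_i$ and the remaining coefficients $a_{i,j}$, $j\neq 1$, none of which records the strict monotonicity of $a_{i,1}$. Your write-up asserts the coefficient of $D_{i,1}|_{F_i}$ is ``computed by the different formula from $a_{i,1}$,'' but $a_{i,1}$ was replaced by $t$ in the pair whose adjunction you take, so the claim does not hold. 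Hence even when $t<1$ the inductive chain breaks; the problem you flag at $t=1$ is not the only obstacle. A further symptom: $(X_i,\,\Delta_i-a_{i,1}D_{i,1}+tD_{i,1})$ is no longer $\mathbb{Q}$-trivial (it is $\sim_\mathbb{Q}(t-a_{i,1})D_{i,1}$); although the restriction to a general fibre of $E_i\to f_i(E_i)$ is $\mathbb{Q}$-trivial, the global constraint $K_{X_i}+\Delta_i\sim_\mathbb{Q}0$ has been discarded by the time you reach $(F_i,\Delta_{F_i})$, and nothing in your construction re-injects it.

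The paper's proof does not iterate the adjunction reduction. Instead, after running an MMP to reduce to the case that each $X_i$ is Fano with Picard number one, one pushes the coefficients up to their limits $a^j_\infty$, applies the ACC for log canonical thresholds in dimension $n$ (available by induction via Theorem~\ref{t-gacc} in dimension $n-1$) to conclude that $(X_i,\Delta'_i)$ is still log canonical for $i\gg0$, and notes that since $\rho(X_i)=1$ and $K_{X_i}+\Delta_i\sim_\mathbb{Q}0$ with $\Delta_i\lneq\Delta'_i$, the divisor $K_{X_i}+\Delta'_i$ is ample, so $(X_i,\Delta'_i)\in\mathfrak{D}^{\circ}_n(I)$. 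The contradiction then comes from the uniform $\delta>0$ of Theorem~\ref{t-bound}(2): for $i\gg0$ one has $\Delta_i\ge(1-\delta)\Delta'_i$, so $K_{X_i}+\Delta_i$ would be big, which is impossible for a CY pair. The essential external input, absent from your proposal, is the boundedness theorem for log general type pairs; it is that result, not a second adjunction step, which converts accumulating coefficients into a numerical impossibility, and there is no indication that it can be replaced by a purely dimensional induction of the kind you describe.
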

 We note that Theorem \ref{t-gacc} in dimension $n-1$ implies Theorem \ref{t-ACC} in dimension $n$. More crucially, 
 Theorem \ref{t-gacc} changes the problem from a local setting to a global one and we have many new tools to study it. In particular, as we will explain below, Theorem \ref{t-gacc} relates to the boundedness results on log general type pairs. This is a central topic in the study of such pairs, especially for the construction of the compact moduli space of KSBA stable pairs, which is the higher dimensional analogue of the moduli space of marked stable curves $\overline{\mathcal{M}}_{g,n}$ (see e.g. \cite{Kol13a,HMX16}).
  
 Since $I$ satisfies the DCC, if such a finite set $I_0$ does not exist, we can construct an infinite sequences  $(X_i,\Delta_i)$ of log canonical Calabi-Yau pairs of dimension at most $n$, such that after reordering, if we write $\Delta_i=\sum^k_{j=1} a_i^j\Delta^j_i$, $\{a_i^j\}^{\infty}_{i=1}$ monotonically increases for any fixed $1\le j\le k$ and  strictly increases for  at least one. Furthermore, after running an MMP, we can reduce to the case that the underlying  variety $X_i$ is a Fano variety with the Picard number $\rho(X_i)=1$. Then if we push up the coefficients of $\Delta_i$ to get a new boundary 
 $$\Delta'_i=_{\rm defn}\sum^k_{j=1}{a^{j}_{\infty}}\Delta^j_i\qquad \mbox{ where } a^j_{\infty}=\lim_i a^j_i,$$
 $K_{X_i}+\Delta_i'$ is ample. By enlarging $I$, we can start with the assumption that all accumulation points of $I$ are also contained $I$. In particular, the coefficients $a^j_{\infty}\in I$. Moreover,  recall that by induction on the dimension, we can assume Theorem \ref{t-gacc} holds for dimension $n-1$, which implies Theorem \ref{t-ACC} in dimension $n$. Thus for $i$ sufficiently large, $(X_i,\Delta_i')$ is also log canonical. 
 Then we immediately get a contradiction to the second part of (2) in the following theorem.
 \begin{thm}[{\cite[Theorem 1.3]{HMX14}}]\label{t-bound}
 Fix dimension $n$ and a DCC set $I\subset [0,1]$. Let $\mathfrak{D}_n(I)$ be the set of all pairs 
$$ \{(X,\Delta)\ |\ \dim(X)=n,   (X,\Delta) \mbox{ is lc and the coefficients of $\Delta$ are in $I$}\}, $$
and $\mathfrak{D}^{\circ}_n(I)\subset \mathfrak{D}_n(I)$ the subset of pairs with $K_X+\Delta$ being big. Then the following holds.
 \begin{enumerate}
 \item The set ${\rm Vol}_n(I)=\{{\rm vol}(K_X+\Delta)|\  (X,\Delta)\in \mathfrak{D}_n(I) \}$ satisfies DCC.
 \item There exists a positive integer $N=N(n,I)$ depending  on $n$ and $I$ such that the linear system $|N(K_X+\Delta)|$ induces a birational map for any $(X,\Delta)\in \mathfrak{D}^{\circ}_n(I)$. 
 Moreover, there exists $\delta>0$ depending only on $n$ and $I$, such that if $(X,\Delta)\in \mathfrak{D}^{\circ}_n(I)$, then $K_X+(1-\delta)\Delta$ is big. 
 \end{enumerate}
 \end{thm}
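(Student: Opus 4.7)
The plan is to prove both parts of Theorem \ref{t-bound} by a joint induction on $n$, bundled with Theorems \ref{t-ACC} and \ref{t-gacc}; the base $n=1$ is a direct check on curves. For the inductive step I assume all three theorems in dimensions $< n$, noting that (as the paper has already observed) Theorem \ref{t-ACC} in dimension $n$ comes for free from Theorem \ref{t-gacc} in dimension $n-1$. The essential tool is adjunction along a codimension-one component $E$ with coefficient $1$ in $\Delta$: writing $(K_X+\Delta)|_E = K_E + \Delta_E$, the coefficients of $\Delta_E$ lie in the DCC set $D(I)$, so $n$-dimensional problems for $(X,\Delta)$ reduce to $(n-1)$-dimensional problems for $(E,\Delta_E)$ to which the inductive hypothesis applies.

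For part (1), I would argue by contradiction. Suppose $v_i = \vol(K_{X_i}+\Delta_i)$ is strictly decreasing for some sequence $(X_i,\Delta_i) \in \mathfrak{D}_n(I)$; replace each by its log canonical model so $K_{X_i}+\Delta_i$ is ample. After passing to a subsequence, assume the number of components $k$ of $\Delta_i$ is constant and the coefficients $a_i^j$ converge from below to $a_\infty^j \in \bar I$. Bumping coefficients up to $a_\infty^j$ gives pairs $(X_i,\Delta_i')$ with $\vol(K_{X_i}+\Delta_i') \geq v_i$ that are still log canonical for $i$ large, by the ACC of log canonical thresholds in dimension $n$. A compactness argument on the pluricanonical maps --- whose uniformity comes from the first half of part (2) --- extracts a common birational model and a single limit pair, and the volumes on the limit force $\lim v_i$ to be achieved, contradicting the strict decrease.

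For the first half of part (2), effective birationality, I would use the standard non-klt center machine of Takayama and Hacon--McKernan. Given $(X,\Delta) \in \mathfrak{D}^{\circ}_n(I)$ with $\vol(K_X+\Delta) \geq v_0 > 0$ (here $v_0$ is the positive infimum of the DCC set $\mathrm{Vol}_n(I)$ supplied by part (1)), pick very general distinct points $x,y\in X$; for $m \geq m_0(n,v_0)$, Riemann--Roch on a resolution produces a divisor $D \in |m(K_X+\Delta)|$ with multiplicity $> 2n$ at both points. Tie-breaking gives $(X,\Delta + tD)$ with an isolated non-klt center at $x$ avoiding $y$, and Nadel vanishing lifts a section of $K_X + \lceil \Delta + tD \rceil$ separating the two points; the ACC of log canonical thresholds bounds the tie-breaking parameter $t$ uniformly in $(n,I)$, yielding a uniform $N$. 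The second half of (2) is a separate contradiction argument: if $\delta_i \to 0$ with $K_{X_i} + (1-\delta_i)\Delta_i$ never big, run the $(K + (1-\delta_i)\Delta)$-MMP to a Mori fiber space $Y_i \to Z_i$ and adjoin to a general fiber, producing a log Calabi--Yau pair whose coefficients lie in the DCC set $D(I)$ but vary strictly, contradicting Theorem \ref{t-gacc} in dimension $< n$.

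The main obstacle is wiring the joint induction so as to avoid circularity: parts (1), (2), Theorem \ref{t-ACC}, and Theorem \ref{t-gacc} in dimension $n$ each seem to want the others in the same dimension, and the implications must be ordered with care. The sharpest single difficulty lies in the compactness step of part (1): producing a limit of a would-be violating sequence requires uniform birational models, hence effective birationality from part (2), which in turn requires the positive lower bound on volumes coming from part (1). Breaking this chicken-and-egg loop is the technical heart of the argument, achieved either by a careful two-step bootstrap or by threading adjunction from dimension $n$ to $n-1$ so as to inherit all uniform quantities from the inductive hypothesis.
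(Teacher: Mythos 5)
Your proposal diverges from the route the paper actually takes, and the divergence conceals a genuine gap rather than being a legitimate alternative. The paper's strategy, following \cite{HMX13,HMX14}, is a clean two-stage decomposition: first one proves that \emph{log birational boundedness} of the family $\{(X,\Delta)\in\mathfrak{D}^{\circ}_n(I) : \vol(K_X+\Delta)\le V\}$ already implies all of Theorem \ref{t-bound}, and then one proves that log birational boundedness separately, via an inductive non-klt centre argument in the style of \cite{AS95}. Crucially, the second stage does \emph{not} presuppose a positive lower bound on volumes; it only uses the fixed upper bound $V$. Your proposal instead tries to establish DCC of volumes (part 1) directly by a limiting argument whose ``compactness step'' invokes effective birationality (part 2), while your effective birationality invokes a positive lower bound $v_0$ for volumes supplied by part (1). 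You flag this circularity yourself in your last paragraph, but merely asserting that it is resolved ``by a careful two-step bootstrap or by threading adjunction'' is not a resolution: as written the two halves of your argument genuinely each need the other in the same dimension, and the inductive hypothesis in dimension $n-1$ does not break the tie. The paper's intermediate notion of log birational boundedness is precisely what severs this loop, and your proposal never makes contact with it.

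Beyond this structural gap, two further points are off. First, in your sketch of effective birationality you say the ACC of log canonical thresholds ``bounds the tie-breaking parameter $t$,'' but the actual difficulty in the Angehrn--Siu induction is not the tie-breaking constant: it is that the isolated non-klt centre one first produces is typically of positive dimension, and one must then equip it with an appropriate boundary structure (the paper calls this ``producing appropriate boundaries on the log canonical centres'') so that the inductive hypothesis in lower dimension applies to cut it down further. For a log pair $(X,\Delta)$ with $\Delta\neq 0$ this is substantially harder than in the canonical case treated by Takayama and Hacon--McKernan, and is one of the main technical contributions of \cite{HMX14}. Your proposal glosses this with ``Nadel vanishing lifts a section,'' which only works once the centre has already been reduced to a point. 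Second, the ``compactness argument on the pluricanonical maps'' that you use to extract a limit pair in part (1) is not a small step to wave at: after bumping coefficients up you have pairs landing in a bounded family of ambient projective spaces, but you still need to control the closures of the images, pass to an algebraic family, and compare volumes in the limit, and this comparison is exactly where the boundedness-from-birational-boundedness machinery of \cite{HMX13} (in the spirit of \cite{Ale94}) is doing serious work. Your final argument for the existence of $\delta$ via a Mori fibre space plus global ACC is a reasonable sketch and is in the right spirit, but it is downstream of the two issues above.
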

The part (1) was a conjecture of Alexeev-Koll\'ar (cf. \cite{Kol94, Ale94}). As already mentioned, it is the key in the proof of the boundedness of  the moduli space of KSBA stable pairs with fixed numerical invariants. See \cite{HMX16} for a survey on this topic and related literature. 
 
 \medskip
 
 During the proof of Theorem \ref{t-bound}, we have to treat (1) and (2) simultaneously.  Such a strategy was first initiated in \cite{Tsuji}, and carried out by \cite{HM06, Tak06} for $X$ with canonical singularities and $\Delta=0$. It started with the simple observation that for smooth varieties of general type, birationally boundedness implies boundedness (after the MMP is settled). In \cite{HMX13}, we prove a log version of this, which says that log birational boundedness essentially implies Theorem \ref{t-bound}. This is significantly harder, and we use ideas from \cite{Ale94} which established the two dimensional case of Theorem \ref{t-bound}.  After this, it remains to show that all pairs in $\mathfrak{D}^{\circ}_n(I)$ with the volume bounded from above by an arbitrarily fixed constant is always log birationally bounded, which is done in \cite{HMX14}. One key ingredient is to produce appropriate boundaries on the log canonical centres such that the classical techniques of inductively cutting log canonical centres initiated in \cite{AS95} can be followed here.  
  
  \medskip
  
 Addressing the proof for the ACC of the log canonical thresholds in this circle of global questions is a crucial idea in our solution to it. In fact, in the pioneering work \cite{MP}, an attempt was already made to establish a connection between the ACC of log canonical thresholds and a global question on boundedness but for the set of $K_X$-negative varieties, i.e. Fano varieties. More precisely, it has been shown in \cite{MP} that the ACC conjecture of log canonical thresholds is implied by Borisov-Alexeev-Borisov (BAB) conjecture which is about the bounededness of Fano varieties with a uniform positive lower bound on log discrepancies. More recently, the BAB conjecture is proved in \cite{Bir16}. 
 
  \medskip
 
 In \cite{HMX14}, under a suitable condition on $I$ and assuming that $J=\{\bb N\}$, we show that the accumulation points of ${\rm LCT}_n(I)=_{\rm defn}{\rm LCT}_n(I, \bb N)$ are contained in ${\rm LCT}_{n-1}(I)$, confirming the Accumulation Conjecture due to Koll\'ar.  

  \medskip
 
It attracts considerable interests to find out the effective bound for the constants appearing in Theorem \ref{t-ACC} and \ref{t-bound}. So far it is only successful for low dimension. For instance when $I=0$, Theorem \ref{t-ACC} implies that there exists an optimal $\delta_n<1$ such that ${\rm LCT}_n=_{\rm defn}{\rm LCT}(\{0\})\subset [0,\delta_n]\cup \{1\}$, and $(\delta_n,1)$ is called {\it the $n$-dimensional gap}. It is known $\delta_2=\frac 5 6$, but $\delta_3$ is unknown. In \cite[8.16]{Kol95}, it is asked whether
$$\delta_n=1-\frac{1}{a_n}\qquad \mbox{where $a_1=2$, $a_i=a_1\cdots a_{i-1}$+1}.$$   Our approach in general only gives the existence of $\delta_n$.

 \begin{rem}[ACC Conjecture on minimal log discrepancy]
 There is another deep conjecture about ACC properties of singularities due to Shokurov, which seems to be still far open. 
 
 \medskip
 
\noindent {\it ACC  Conjecture of mld}: Given a log canonical singularity $x\in (X,\Delta)$, we can define 
 $${\rm mld}_{X,\Delta}(x)= \min \{A_{X,\Delta}(E)\ | \ {\rm Center}_E(X)=\{x\}\  \}.$$
 If we fix a finite set $I$, and it is conjectured that the set
 $${\rm MLD}_n(I)=\{ {\rm mld}_{X,\Delta}(x)| \ \dim(X)=n, \mbox{ coefficients of $\Delta$ are in I} \}$$
 satisfies the ACC. 
 
  \medskip
 
 However, compared to  the log canonical thresholds, what kind of global questions connect to this conjecture still remains to be in a myth. For instance, it is not clear which special geometric structure is carried by a divisor attaining the minimal log discrepancy.
 \end{rem}
 
 \section{Klt singularities and K-stability}\label{s-klt}
 
In this section, our discussion will focus on klt singularities. When klt singularities were first introduced, they appeared to be just a technical tool to prove results in the MMP. However, it has become more and more clear that the klt singularities form a very interesting class of singularities, which naturally appears in many context besides the MMP such as constructing K\"ahler-Einstein metrics of Fano varieties etc..

In particular, philosophically,  it has been clear that there is an analogy between klt singularities and Fano varieties. Traditionally, people often prove some properties for an arbitrary Fano variety, then figure out what they imply for the cone singularity over a Fano variety, and finally generalise the statements  to any klt singularity. Only after the corresponding MMP results are established (e.g. \cite{BCHM10}), such analogy can be carried out  in a more concrete manner by really attaching suitable global objects, e.g. Fano varieties, to the singularities.  The first construction was the plt blow up (cf. e.g. \cite{Xu14}) which for a given  klt singularity $x\in (X,\Delta)$, one constructs a birational model $f\colon Y\to X$ such that $f$ is isomorphic outside $x$, $f^{-1}(x)$ is an irreducible divisor $S$, and $(Y,S+f_*^{-1}\Delta)$ is plt. We can also assume $-S$ is ample over $X$, and then $(S,\Delta_S)$ is a log Fano pair, where 
 $$K_S+\Delta_S=_{\rm defn} (K_Y+S+f_*^{-1}\Delta)|_S.$$ 

The divisor $S$ in this construction is called {\it a Koll\'ar component}. It was used to show some local topological properties of $x\in X$ including $\DR(x\in X)$ is contractible (\cite{dFKX}), and the pro-finite completion $\hat{\pi}^{\rm loc}_1(x\in X)$ of the local fundamental group 
$${\pi}^{\rm loc}_1(x\in X)=_{\rm defn}\pi_1({\rm Link}(x\in X))$$
 is finite (\cite{Xu14, TX17}). However, given a klt singularity, usually there could be many Koll\'ar components over it. Only until the circle of ideas of local stability were introduced in \cite{Li15}, a more canonical  picture, though some parts still remain conjectural, becomes clear.  In what follows  we give a survey on this topic. 

\begin{defn}[Valuations] Let $R$ be an $n$-dimensional regular local domain essentially of finite type over a ground field $k$ of characteristic zero. Then a  (real) valuation $v$ of $K={\rm Frac}(R)$ is any map $v \colon K^* \to \mathbb{R}$
which satisfies the following properties for all $a, b$ in $K^*$:
\begin{enumerate}
\item $v(ab) = v(a) + v(b)$,
\item $v(a + b) \ge \min(v(a), v(b))$, with equality if $v(a)\neq v(b)$.
\end{enumerate}
\end{defn}
Let $(X,x)=({\rm Spec}(R),\mathfrak{m})$, we denote the space of valuations 
$${\rm Val}_{X,x}=\{\mbox{real valuations $v$ of $K$ with $v(f)>0$ for any $f\in \mathfrak{m}$}\}.$$
It has a natural topology (see \cite[Section 4.1]{JM12}).

If $(X,\Delta)$ is klt, following \cite[Section 5]{JM12}, we can define the function of log discrepancy $A_{X,\Delta}(v)$ on $ {\rm Val}_{X,x}$ extending the log discrepancy of divisorial valuations defined in Section \ref{s-dualcomplex}, and we denote by $ {\rm Val}^{=1}_{X,x}\subset {\rm Val}_{X,x}$  the subset consisting of all valuations with log discrepancy equal to 1. Similar to the global definition of volumes, we can also define a local volume of a valuation for $v\in \Val_{X,x}$ (see \cite{ELS})
$$\vol(v)=\lim \frac{{\rm length}(R/\fa_k)}{k^n/n!},$$
where $\fa_k=\{f\in R | \ v(f)\ge k\}$.
 
 \begin{defn}[\cite{Li15}]
 For any valuation $v\in  {\rm Val}_{X,x}$, we define the {\it normalised volume} $\hvol_{X,\Delta}(v)=(A_{X,\Delta}(v))^n\cdot \vol(v)$, and the volume of the klt singularity $x\in (X,\Delta)$ to be $\vol(x,X,\Delta)=\inf_{v\in \Val_{X,x}}\hvol(v)$.  By abuse of notation, we will often denote  $\vol(x,X,\Delta)$ by $\vol(x,X)$ if the context is clear.
 \end{defn}
 It is easy to see that $\hvol(v)=\hvol(\lambda v)$ for any $\lambda>0$, so that we can only consider the function $\hvol$ on $\Val^{=1}_{X,x}$.
 In \cite{Li15}, it was shown that $\vol(x,X)>0$. In \cite{Liu16}, a different characterisation %which is closer to the spirit of asymptotic invariants 
 is given:
 \begin{equation}\label{Liu}
 \vol(x,X)=\inf_{\mathfrak{m}-{\rm primary\ } \fa } \mult(\fa)\cdot \lct(X,\Delta;\fa)^n . 
 \end{equation}
See \cite[9.3.14]{Laz04} for the definition of the log canonical threshold of a klt pair $(X,\Delta)$ with respect to an ideal $\fa$. 
Then in \cite{Blu16}, using an argument combining estimates on asymptotic invariants and the generic limiting construction, it is show that there always exists a valuation $v$ such that
$\vol(x,X)=\hvol(v)$, i.e., the infimum is indeed a minimum, confirming a conjecture in \cite{Li15}. Therefore the main questions left are two-fold.
\begin{ques}For a klt singularity $x\in (X,\Delta)$,
\begin{enumerate}
\item[I.] Characterise the geometric properties of the minimiser $v$.
\item[II.] Compute the volume $\vol(x,X)$. 
\end{enumerate}
\end{ques}
In what follows below, we will discuss these two questions in different sections.   

 \subsection{Geometry of the minimiser}
 In the recent birational geometry study of Fano varieties, it has become clear that the interplay between the ideas from higher dimensional geometry and the ideas from the complex geometry, centred around the study of K\"ahler-Einstein metrics, will lead to deep results. The common ground is the notion of K-(semi, poly)stability and  their cousin definitions (see e.g. \cite{Oda13,LX14,Fuj15} etc.).  An example is the construction of a proper moduli scheme parametrising the smoothable K-polystable Fano varieties (see e.g. \cite{LWX}). Although to establish a moduli space of Fano varieties is certainly a natural question to algebraic geometers, without a condition like K-stability with a differential geometry origin, such a functor does not behave well (e.g. the functor of smooth family Fano manifolds is not seperated.). Moreover the arguments used in the current construction of moduli spaces of K-polystable Fano varieties heavily depend on the results proved using analytic tools as in \cite{CDS, Tia15}. 
 
 Our main motivation to consider $v$ is to establish a `local K-stability' theory for klt singularities, guided by the local-to-global philosophy mentioned in the introduction. In particular, we propose the following conjecture for all klt singularities. 
 \begin{conj}[Stable Degeneration Conjecture, \cite{Li15, LX17b}]\label{conj-local}
 Given any arbitrary klt singularity $x\in (X={\rm Spec}(R),\Delta)$. There is a  unique minimiser $v$ up to rescaling. Furthermore, $v$ is quasi-monomial, with a finitely generated associated graded ring $R_0=_{\rm defn}{\rm gr}_v(R)$, and the induced degeneration
 $$(X_0={\rm Spec}(R_0),\Delta_0, \xi_v)$$ is a K-semistable Fano cone singularity. (See below for the definitions.)  \end{conj}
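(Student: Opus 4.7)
The plan is to split the conjecture into four structural assertions about a minimizer $v$ and attack them in order, taking Blum's existence theorem as the starting point. Namely, for a putative minimizer $v\in \Val_{X,x}$ I would try to prove, in sequence: (i) $v$ is quasi-monomial on some log smooth model, (ii) $\operatorname{gr}_v(R)$ is finitely generated, (iii) the resulting degeneration $(X_0,\Delta_0,\xi_v)$ is a K-semistable Fano cone, and (iv) $v$ is unique up to rescaling. The logical order is important: uniqueness is best established after one has a K-semistable degeneration to work with, while K-semistability itself requires the finitely generated graded ring to even make sense.

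For step (i), the approach is to approximate $v$ by Koll\'ar components. Using Liu's reformulation $\hvol(v)=\inf_{\fa}\mult(\fa)\cdot\lct(X,\Delta;\fa)^n$, one can manufacture, for each $\epsilon>0$, an $\mathfrak{m}$-primary ideal $\fa$ whose normalized multiplicity beats $\hvol(v)+\epsilon$, and then extract a plt blowup to produce a Koll\'ar component achieving nearly this value. Passing to a fixed log smooth model dominating a cofinal family of such components, the sublevel sets $\{A_{X,\Delta}\le C\}$ are compact within the quasi-monomial stratum (as in the work of Jonsson--Musta\c{t}\u{a}), so the limit of the approximating valuations is quasi-monomial; the minimality of $\hvol(v)$ together with the equivariance of $\hvol$ under the torus generated by the rational rank of $v$ gives the rigidity needed to identify this limit with $v$ itself.

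Step (ii), the finite generation of $\operatorname{gr}_v(R)$, is the principal obstacle. For a divisorial $v$ it is (essentially) equivalent to the existence of a Koll\'ar component and hence follows from \cite{BCHM10}. For quasi-monomial $v$ of higher rational rank one needs a new idea: I would try to approximate $v$ by divisorial $v_i$, use finite generation of each $\operatorname{gr}_{v_i}(R)$ to produce a one-parameter family of degenerations, and then show that this family compactifies to the limit. The difficulty is that the relevant models are local and carry no globally ample divisors, so BCHM does not apply directly; one has to feed in the strong asymptotic control on multiplicities coming from the minimality of $\hvol(v)$, most likely through an MMP on a toroidal resolution adapted to the rational rank of $v$, in order to show that only finitely many generators of the Rees algebra of $\fa_\bullet=\{f:v(f)\ge k\}$ are needed. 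I expect this is where the bulk of the technical work lies.

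Once finite generation is in hand, step (iii) should be essentially formal: the variational characterization of $v$ as a minimizer of $\hvol$ translates, via the Li--Xu dictionary between normalized volume and the generalized Futaki invariant on Fano cones, into K-semistability of $(X_0,\Delta_0,\xi_v)$, while the klt property of $(X_0,\Delta_0)$ follows from semicontinuity of log discrepancies along the test configuration. For step (iv), uniqueness, I would argue that if $v'$ were another minimizer then both $v$ and $v'$ induce K-semistable Fano cone degenerations of the same normalized volume; a uniqueness-of-K-semistable-degeneration argument in the spirit of the global K-polystable moduli of Fano varieties constructed in \cite{LWX} should then force $v'=\lambda v$. Among these four steps, (ii) is unambiguously the hardest, and I would plan to spend most of the effort there.
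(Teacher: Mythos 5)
The statement you are addressing is stated in the paper as a \emph{conjecture}, not a theorem: the paper does not prove it. After sketching the proofs of Theorems~\ref{t-kollar} and~\ref{t-high}, the author says explicitly that in the higher-rank case the finite generation of ${\rm gr}_v(R)$ ``we can not show\dots thus we have to post it as an assumption,'' and that ``it remains wide open to verify that the minimiser is always quasi-monomial.'' So the honest test of your proposal is whether your sketches for your steps (i) and (ii) contain a new idea that would close these acknowledged gaps, and they do not.

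For step (i), approximating the minimizer using Liu's formula $\vol(x,X)=\inf_{\fa}\mult(\fa)\cdot\lct(X,\Delta;\fa)^n$ and extracting Koll\'ar components is exactly what the paper already does (Step~1 of the sketched proof, and \cite{LX16}). Compactness of sublevel sets of $A_{X,\Delta}$ inside a fixed dual complex produces a quasi-monomial \emph{limit} valuation $w$ achieving the infimum; but nothing in your sketch forces $w=v$. You appeal to ``equivariance of $\hvol$ under the torus generated by the rational rank of $v$'' and to ``rigidity,'' but $v$ has no torus, cone of Reeb vectors, or rational rank attached to it until you have already shown it is quasi-monomial—so the argument is circular. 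This is precisely the gap the paper flags as open. For step (ii), you correctly identify it as the crux, but your plan—approximate $v$ by divisorial $v_i$, use finite generation of each ${\rm gr}_{v_i}(R)$, and ``show that this family compactifies,'' fed by ``strong asymptotic control on multiplicities'' via some toroidal MMP—restates the difficulty rather than resolving it. The paper's Step~2 runs in the opposite direction: it \emph{assumes} ${\rm gr}_v(R)$ is finitely generated and then proves that ${\rm gr}_v(R)\cong{\rm gr}_{\ord_{S_i}}(R)$ for $i\gg 0$; there is no hint that this can be reversed to deduce finite generation, and BCHM genuinely does not apply to the non-finitely-generated local Rees algebra $\{\fa_k\}$ without further input. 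Your steps (iii) and (iv) do track Theorem~\ref{t-high} in spirit, though ``essentially formal'' understates Steps~3--5 of the paper's argument (one needs the Futaki-derivative identity of \cite{MSY,CS15}, convexity of $\hvol$ on the Reeb cone, and a delicate construction of degenerations from equivariant Koll\'ar components). And for uniqueness, the paper does not argue via K-polystable moduli in the style of \cite{LWX}; it uses strict convexity of the normalized volume via a Newton--Okounkov construction (Step~6) and then an MMP degeneration of a competing minimizer together with ACC of log canonical thresholds (Step~7), which is more elementary and more directly local than invoking a global moduli theory whose construction itself presupposes much of this local stability picture. Bottom line: your proposal is a reasonable reading of the existing roadmap, but it is not a proof; parts (i) and (ii) remain genuine open gaps, exactly as the paper states.
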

 
For the definition of quasi-monomial valuations, see \cite[Section 3]{JM12}. It is shown that they are the same as Abhyankar valuations (\cite[2.8]{ELS}).
From an arbitrary  quasi-monomial valuation $v \in \Val_{X,x}$, there is a standard process to degenerate ${\rm Spec}(R)$ to the associated graded ring ${\rm Spec}(R_0)$ over a complicated (e.g. non-Noetherian) base (see \cite{Tei03}). However, when $R_0$ is finitely generated, the degeneration can be understood in a much simpler way: we can embed ${\rm Spec}(R)$ into an affine space $\bb C^N$ of sufficiently large dimension, such that there exists a $\bb C^*$-action on $\bb C^N$ with a suitable weight $(\lambda_1,..., \lambda_N)$ satisfying that ${\rm Spec}(R_0)$ is the degeneration of ${\rm Spec}(R)$ under this one-parameter $\bb C^*$-action (see e.g. \cite{LX17b}). 
\bigskip
 
 The following example which predates our study is a prototype from the context of constructing Sasaki-Einstein metrics in Sasaki geometry.
\begin{exmp}[Fano cone singularity]\label{c-fanocone}
Assume that $X={\rm Spec}_{\bb C}(R)$ is a normal affine variety. Denote by $T$ a complex torus $(\bb C^*)^r$ which acts on $X$ faithfully. 
Let $N={\rm Hom}(\mathbb{C}^*, T)\cong \bb Z^{\oplus r}$ be the co-weight lattice and $M=N^*$ the weight lattice. We have a weight space decomposition 
\[
R=\bigoplus_{\alpha\in \Gamma} R_\alpha \mbox{  where \ } \Gamma =\{ \alpha\in  M  |\ R_{\alpha}\neq 0\}.
\]
We assume $R_{(0)}=\mathbb{C}$ which means there is a  unique fixed point $o$ contained in the closure of each orbit. %An ideal $\fa$ is called homogeneous if $\fa=\bigoplus_{\alpha\in \Gamma}\fa\cap R_\alpha$.
Denote by $\sigma^{\vee}\subset M_{\mathbb{R}}$ the  convex cone generated by $\Gamma$, which is called the {\it weight cone} (or the {\it moment cone}).  
We define the {\it Reeb cone}
$$\mathfrak{t}^+_{\bb R}:=\{\ \xi \in N_{\mathbb{R}}\ \ | \  \langle \alpha, \xi \rangle>0 \mbox{ for any }\alpha\in \Gamma \}.$$
Then for any vector $\xi\in \mathfrak{t}^+_\bb R$ on $X$ we can associate a natural valuation $v_{\xi}$, which is given by 
$$v_{\xi}(f)= \min\{\langle \alpha, \xi \rangle \ |\  f_{\alpha}\neq 0 \mbox{ if we write }f=\sum f_{\alpha}\}.$$
If  $X$ have klt singularities, we call $(X,\xi)$ a {\it Fano cone singularity} for the following reason: for any $\xi\in N_{\bb Q}\cap \mathfrak{t}^+_{\bb R}$, then it generates a $\bb C^*$-action on $X$, and the quotient will be a log Fano pair as we assume $X$ is klt. 

 For isolated Fano cone singularities, minimising the normalised volume $\hvol$ among all valuations of the form $v_{\xi}$ ($\xi \in \mathfrak{t}^+_\bb R$) was initiated in the work \cite{MSY}, where $\hvol$ is defined analytically. It is shown there that the existence of a Sasaki-Einstein metric along $\xi_0$ implies $v_{\xi_0}$ is a minimiser among all $\xi \in \mathfrak{t}^+_\bb R$. Moreover, it is proved that $\hvol$ is strictly convex on $ \mathfrak{t}^+_\bb R$, which is  an evidence for the claim of  the uniqueness in Conjecture \ref{conj-local}. 
 
 Later, in \cite{CS15}, following \cite{Tia97, Don01}, the K-(semi)polystability was formulated for Fano cone singularities. It was a straightforward calculation from the definition to show that if $(X,\xi_0)$ is K-semistable, then $v_{\xi_0}$ is a minimiser among all valuations of the form $v_{\xi}$ for $\xi\in  \mathfrak{t}^+_\bb R$. However, it takes  significant more work in \cite{LX17b} to show that if $(X,\xi_0)$ is K-semistable, then $v_{\xi_0}$ is a minimiser in the much larger space $ \Val_{X,x}$ and  unique among all quasi-monomial valuations up to rescaling (see Step 3 and 6 in the sketch of the proofs of Theorem \ref{t-kollar} and \ref{t-high} below). 
\end{exmp}
 
 \bigskip
 
 For a  normal singularity $x\in (X={\rm Spec}(R),\Delta)$ with a quasi-monomial valuation $v\in \Val_{X,x}$ of rational rank $r$, we assume that its associated graded ring $R_0$ is finitely generated. By the grading,  ${\rm Spec}(R_0)$ admits a torus $T\cong (\mathbb{C}^*)^r$-action, thus we can put it in (a log generalisation of) the setting of Example \ref{c-fanocone} as follows.    Let $\Phi$ be the valuative semigroup of $v$, then it generates a group $\Phi^{\rm g}\cong \bb Z^r$ which is isomorphic to the weight lattice $M=N^*$. Under this isomorphism the weight cone is generated by $\alpha\in \Phi$. Since the embedding $\iota_v\colon \Phi^{\rm g}\to \bb R$ restricts to $\iota_v^+\colon \Phi\to \mathbb{R}_{+}$, it yields a vector in the Reeb cone $ \mathfrak{t}^+_\bb R\subset N_{\bb R}$, denoted by $\xi_v$. 
 Let $\Delta_0$ be the natural divisorial degeneration of $\Delta$ on $X_0={\rm Spec}(R_0)$.
We call such a valuation $v\in \Val_{x,X}$  to be {\it K-semistable}, if  $(X_0,\Delta_0,\xi_v)$ is a K-semistable Fano cone. In particular, we require $(X_0,\Delta_0)$ to be klt. 
 Since a K-semistable valuation is always a minimiser (see Theorem \ref{t-high}), Conjecture \ref{conj-local} predicts that for any klt singularity $x\in (X,\Delta)$, the minimiser of $\hvol$ is precisely the same as the notion of a K-semistable valuation. 
 %  always has a unique K-semistable valuation over it.  In fact, the uniqueness will be confirmed in Theorem \ref{t-high}, but the existence is not known. 

\bigskip
 
 We have established various parts of Conjecture \ref{conj-local}. First we consider the case that the minimiser is a divisorial valuation.
 \begin{thm}[{\cite[Theorem 1.2]{LX16}}]\label{t-kollar}
 Let $x\in (X,\Delta)$ be a klt singularity. If a divisorial valuation $\ord_S\in \Val_{X.x}$ minimises the function $\hvol_{X,\Delta}$, then $S$ is a Koll\'ar component over $x$, and the induced log Fano pair $(S,\Delta_S)$ is K-semistable. Furthermore, $\hvol(\ord_S)<\hvol(\ord_E)$ for any divisor $E\neq S$ centred on $x$.
 
 Conversely, if $S$ is a Koll\'ar component centred on $x$ such that the induced log Fano pair $(S,\Delta_S)$ is K-semistable, then $\ord_S$ minimises $\hvol_{X,\Delta}$.
 \end{thm}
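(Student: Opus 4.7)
The plan is to tackle the easier converse direction first, then the forward direction, saving K-semistability for last. For the converse, suppose $S$ is a Kollár component with plt blow-up $\mu\colon Y\to X$, and $(S,\Delta_S)$ is K-semistable. Consider the canonical degeneration of $(X,\Delta)$ to the orbifold affine cone $(X_0,\Delta_0)=C(S,\Delta_S;-r(K_S+\Delta_S))$ induced by the filtration $\mathfrak{a}_k=\mu_*\MO_Y(-k S)$; under this degeneration, $\ord_S$ is carried to the canonical Reeb valuation $v_{\xi_0}$ on $X_0$. Standard cone calculations translate K-semistability of $(S,\Delta_S)$ into K-semistability of $(X_0,\Delta_0,\xi_0)$ as a Fano cone (Collins--Szekelyhidi type). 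Admitting the Fano-cone case of the stable degeneration conjecture sketched in Example \ref{c-fanocone} (i.e., the Reeb valuation of a K-semistable Fano cone minimises $\hvol$ on the cone), combined with lower semicontinuity of $\hvol$ under the degeneration $X\rightsquigarrow X_0$, one concludes $\hvol(\ord_S)=\hvol(v_{\xi_0})\le \hvol(v)$ for every $v\in\Val_{X,x}$.

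For the forward direction, assume $\ord_S$ minimises $\hvol_{X,\Delta}$. First I would show $S$ is a Kollár component. Take any log resolution extracting $S$, and run a relative MMP over $X$ with respect to $K_Y+S+f_*^{-1}\Delta$ to contract all exceptional divisors except $S$, obtaining $f'\colon Y'\to X$ with $\mathrm{Ex}(f')=S$ and $-S$ relatively ample. To upgrade this to plt, suppose $(Y',S+f'^{-1}_*\Delta)$ fails to be plt along $S$. Then one produces a divisor $E\neq S$ with $A(E,X,\Delta)$ small and centre meeting $S$, and builds a one-parameter family of quasi-monomial valuations $v_t$ along the flag $(S,E)$ interpolating between $\ord_S$ and a direction towards $\ord_E$. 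An explicit first-order calculation of $\hvol(v_t)$, using Liu's characterisation
\[
\vol(x,X,\Delta)=\inf_{\fa\ \mathfrak{m}\text{-primary}}\mult(\fa)\cdot\lct(X,\Delta;\fa)^n
\]
applied to $\fa_{k,l}=\{f:\ord_S(f)\ge k,\ \ord_E(f)\ge l\}$, shows $\hvol(v_t)<\hvol(\ord_S)$ for small $t>0$, contradicting minimality. Hence $(Y',S+f'^{-1}_*\Delta)$ is plt and $S$ is a Kollár component.

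Next, to prove $(S,\Delta_S)$ is K-semistable, I would argue by contradiction: assume a special test configuration of $(S,\Delta_S)$ with $\Fut<0$, degenerating $(S,\Delta_S)$ to $(S_0,\Delta_{S_0})$ with a one-parameter subgroup generated by $\eta$. Lifting along the plt blow-up $\mu$, this corresponds to an equivariant degeneration of $X$ to a Fano cone $(X_0,\Delta_0,\xi_0)$ over $(S_0,\Delta_{S_0})$, on which perturbing the Reeb direction in the $\eta$-direction yields a family of valuations $v_s\in\Val_{X_0,o}$ with
\[
\tfrac{d}{ds}\hvol(v_s)\big|_{s=0}=c\cdot \Fut<0
\]
for a positive constant $c$ (this is the local analogue of Fujita's derivative formula). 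Pulling back via degeneration and invoking semicontinuity gives a divisorial valuation on $X$ with $\hvol<\hvol(\ord_S)$, a contradiction. The strict inequality $\hvol(\ord_S)<\hvol(\ord_E)$ for any other divisor $E$ over $x$ follows from the same derivative formula: the Hessian of $\hvol$ along any non-trivial one-parameter family of quasi-monomial valuations through $\ord_S$ is strictly positive, by strict convexity of $\hvol$ on the Reeb cone of the central fibre (the analogue of the Martelli--Sparks--Yau convexity).

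The hard part will be the K-semistability step, where one must make rigorous the bridge from a destabilising test configuration of the global log Fano pair $(S,\Delta_S)$ to an explicit valuation on the original singularity with strictly smaller normalised volume. This requires simultaneously keeping track of the plt blow-up on $X$ and the $\bb G_m$-equivariant degeneration of $S$, carefully identifying the central fibre of the resulting degeneration of $X$ with a Fano cone, and deriving the precise derivative formula relating $\tfrac{d}{ds}\hvol(v_s)$ to $\Fut$ — this is a local mirror of Fujita's inequality $\beta(E)\ge 0$, and is the most technical point of the argument.
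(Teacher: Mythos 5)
Your plan follows essentially the same architecture as the paper's sketch: degenerate the singularity to a log Fano cone $(X_0,\Delta_0,\xi_0)$ along the filtration defined by $S$, use the Martelli--Sparks--Yau/Collins--Sz\'ekelyhidi derivative formula $\frac{d}{dt}\hvol(\xi+t\eta)\big|_{t=0}=\Fut$ to link minimisation of $\hvol$ to K-semistability, and use lower semicontinuity of $\vol$ under the equivariant degeneration (the ``degeneration to initial ideals'' argument) to transfer estimates between $X$ and $X_0$. Your treatment of the converse and of the K-semistability-by-contradiction step match Steps 3--5 of the paper's proof outline.

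There are two places where you diverge from, or are thinner than, what the paper does. First, for the implication ``divisorial minimiser $\Rightarrow$ Koll\'ar component'': you propose a direct first-order perturbation of $\hvol$ along a flag of quasi-monomial valuations through $(S,E)$. The paper instead proves first that $\vol(x,X)=\inf\{\hvol(\ord_{S'}) : S'\ \text{Koll\'ar component}\}$ by taking dlt modifications of the pair $(X,\Delta+\lct(X,\Delta;\fa)\cdot\fa)$ for each $\mathfrak{m}$-primary ideal $\fa$ in Liu's formula, and extracting a Koll\'ar component from $\D(\Delta_Y)$ with smaller normalised volume; the identification of the minimiser with a Koll\'ar component then falls out. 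Your perturbation route is in principle viable (and close to Blum's independent approach), but the first-order calculation of $\hvol(v_t)$ along a flag is exactly the nontrivial point, and as stated it is more of a promissory note than an argument. Second, and more seriously, the strict inequality $\hvol(\ord_S)<\hvol(\ord_E)$ for \emph{every} $E\neq S$ over $x$ is not a direct consequence of strict convexity of $\hvol$ on the Reeb cone of the \emph{central fibre}: an arbitrary divisor $E$ over $x$ on $X$ does not literally live in that Reeb cone. One must first construct a common degeneration bringing $\ord_E$ (or an approximating family) to a $T$-equivariant valuation on $X_0$ and then compare there; this is essentially the content of Step 7 in the paper's sketch (using Diophantine approximation, an auxiliary lc model $Z\to X$, and a degeneration $Z\dasharrow Z_0\to X_0$), and it is a genuine extra step rather than a formal consequence of MSY convexity. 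You should flag that gap explicitly.
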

 An immediate consequence is that, if instead of searching general Koll\'ar components, we only look for the semi-stable ones, then if one exists, it is unique. In general, Conjecture \ref{conj-local} predicts that if we choose a sequence of rational vectors $v_i \in  \mathfrak{t}^+_{\bb Q}$ that converge to $v$, then the quotient   of $X_0$ by the $\mathbb{C}^*$-action along $v_i$ induces a Koll\'ar component $S_{i}$ centred on $x\in (X,\Delta)$ which satisfies $c_i\cdot \ord_{S_i}\to \xi_v$ after a suitable rescaling (cf. \cite{Li15b, LX17b}). In \cite{LX16}, we confirm that any minimiser is always a limit of a sequence of Koll\'ar components with a suitable rescaling. 
 
 In general, a quasi-monomial valuation with higher rational rank could appear as the minimiser (cf. \cite{Blu16}). In this case, we can also prove the following result. 
 
  \begin{thm}[{\cite[Theorem 1.1]{LX17b}}]\label{t-high}
Let $x\in (X,\Delta)$ be a klt singularity.  Let $v$ be a quasi-monomial valuation in $\Val_{X,x}$ that minimises $\hvol_{(X,\Delta)}$ and has a  finitely generated associated graded ring  ${\rm gr}_v(R)$. Then the following properties hold:
\begin{enumerate}
\item[(a)] The degeneration
$\big(X_0=_{\rm defn}{\rm Spec}\big({\rm gr}_v(R)\big), \Delta_0, \xi_v \big)$ is a K-semistable Fano cone, i.e. $v$ is a K-semistable valuation;
%The pair $(X_0,D_0)$ admits a natural $T={\mathbb{C}^*}^r$-action with a projective klt Fano quotient.
\item[(b)] Let $v'$ be another quasi-monomial valuation in $\Val_{X,x}$ that minimises $\hvol_{(X,\Delta)}$. Then $v'$ is a rescaling of $v$.
\end{enumerate}
Conversely, any quasi-monomial valuation that satisfies (a) above is a minimiser. 
 \end{thm}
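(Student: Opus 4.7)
The plan is to push the minimisation problem from $(X,\Delta)$ onto the Fano cone $(X_0,\Delta_0,\xi_v)$, reduce K-semistability there to a statement about torus-equivariant valuations, and finally invoke strict convexity of $\hvol$ along the Reeb cone. Since ${\rm gr}_v(R)$ is finitely generated by hypothesis, one can realise $X_0 = {\rm Spec}({\rm gr}_v(R))$ as an explicit $\mathbb{C}^*$-degeneration of $X$ in some affine ambient space, under which $v$ becomes the monomial valuation $v_{\xi_v}$ on $X_0$ determined by the Reeb vector $\xi_v$. In particular $\hvol_{X,\Delta}(v)=\hvol_{X_0,\Delta_0}(v_{\xi_v})$.

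The first step for (a) is to construct, for any valuation $w\in\Val_{X,x}$, a degenerated valuation $w_0\in\Val_{X_0,x_0}$ with $\hvol_{X_0,\Delta_0}(w_0)\le \hvol_{X,\Delta}(w)$. This inequality follows by combining Liu's characterisation \eqref{Liu} with the fact that the graded family of initial ideals $\{{\rm in}_v\fa_k(w)\}$ on $X_0$ has the same multiplicity as, and no larger log canonical threshold than, $\{\fa_k(w)\}$ on $X$. Consequently, if $v$ minimises $\hvol_{X,\Delta}$, then $v_{\xi_v}$ minimises $\hvol_{X_0,\Delta_0}$. The second step is to show that if the Reeb-direction valuation of a Fano cone minimises $\hvol$ among all valuations, then the cone is K-semistable: after $T$-equivariant averaging it suffices to test on $T$-equivariant special test configurations $(\mathcal{X},\Delta_{\mathcal{X}},\eta)$, and for such a test configuration the Donaldson--Futaki invariant can be rewritten, following \cite{CS15} and the reformulation in \cite{LX17b}, as a positive multiple of $\hvol(w_\eta)-\hvol(v_{\xi_v})$ where $w_\eta$ is the valuation on $X_0$ read off from the central fibre. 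Minimality of $v_{\xi_v}$ thus gives K-semistability.

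For (b), suppose $v'$ is another quasi-monomial minimiser with finitely generated ${\rm gr}_{v'}(R)$. A further $T$-equivariant degeneration places $\xi_{v'}$ in the Reeb cone of a common Fano cone containing $\xi_v$, and strict convexity of $\hvol$ along the Reeb cone from \cite{MSY} forces $\xi_{v'}=\lambda\xi_v$ for some $\lambda>0$, hence $v'=\lambda v$. For the converse, given $v$ satisfying (a), Step~1 shows $\hvol(w)\ge \hvol(w_0)$ for every $w\in\Val_{X,x}$, so it remains to prove that on a K-semistable Fano cone $v_{\xi_v}$ minimises $\hvol$ over the whole valuation space, not merely over Reeb valuations. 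This upgrade is significantly more delicate than its test-configurational dual; it is established in \cite{LX17b} by iterating the degeneration-and-average procedure to reduce repeatedly to the torus-equivariant setting, where convexity on the Reeb cone closes the argument.

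The main obstacle is precisely this last upgrade and, parallel to it, the comparison $\hvol_{X_0,\Delta_0}(w_0)\le \hvol_{X,\Delta}(w)$ for arbitrary valuations. Controlling log discrepancies under the degeneration depends crucially on finite generation of ${\rm gr}_v(R)$, without which the degeneration lives over a highly non-Noetherian base and the key tools (Liu's formula, equivariant log resolution, approximation of valuations by Koll\'ar components, Fujita-type expressions for Donaldson--Futaki invariants) are unavailable. Weaving these ingredients together through asymptotic multiplier ideals and matching the valuative and test-configurational avatars of K-semistability is where the technical heart of \cite{LX17b} lies.
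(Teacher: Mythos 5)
Your overall plan shares several ingredients with the paper's argument (Liu's formula \eqref{Liu}, degeneration to initial ideals, the Martelli--Sparks--Yau convexity of $\hvol$ on the Reeb cone, reduction to $T$-equivariant test configurations and Koll\'ar components), but the way you assemble them contains two genuine gaps.

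First, for (a) you want to deduce ``$v_{\xi_v}$ minimises $\hvol_{X_0,\Delta_0}$'' from ``$v$ minimises $\hvol_{X,\Delta}$'' by exhibiting, for every $w\in\Val_{X,x}$, a degenerated valuation $w_0\in\Val_{X_0,o}$ with $\hvol_{X_0}(w_0)\le\hvol_{X}(w)$. But that inequality only yields $\vol(o,X_0)\le\vol(x,X)=\hvol_{X_0}(v_{\xi_v})$; to conclude that $v_{\xi_v}$ is the minimiser on $X_0$ you need the opposite bound, i.e.\ a \emph{lift} of an arbitrary $w_0$ on $X_0$ to a $w$ on $X$ with $\hvol_X(w)\le\hvol_{X_0}(w_0)$, and no such lifting is produced. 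The paper avoids this trap: it argues by contradiction directly on $X$ (Step 5 of the sketch), using a $T$-equivariant Koll\'ar component $S$ on $X_0$ with $\hvol_{X_0}(\ord_S)<\hvol_{X_0}(\xi_v)$ to build, via Anderson's degeneration technique, a one-parameter family of valuations $v_t\in\Val_{X,x}$ on the \emph{original} $X$ with $\hvol_X(v_t)<\hvol_X(v)$, contradicting minimality. You also skip entirely the fact that the central fibre $(X_0,\Delta_0)$ is klt, which is the content of the paper's Step 2 and is needed before K-semistability of the Fano cone even makes sense; it is proved there by matching ${\rm gr}_v(R)$ with ${\rm gr}_{\ord_{S_i}}(R)$ for an approximating Koll\'ar component $S_i$ and using that its cone is klt.

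Second, for (b) you quietly assume that the competing minimiser $v'$ also has a finitely generated associated graded ring and that $\xi_{v'}$ can be placed in ``a common Fano cone'' with $\xi_v$. Neither is granted: the theorem assumes finite generation only for $v$, and a priori $v'$ has no degeneration of this type. The paper's Step 7 instead uses Diophantine approximation and an MMP argument (including ACC for log canonical thresholds) to extract a model $Z\to X$ toroidal along $v'$, degenerates $Z\to X$ to $Z_0\to X_0$, produces a quasi-monomial $w$ on $X_0$ that degenerates $v'$ with $\hvol_{X_0}(w)=\hvol_X(v')$, then identifies $w$ with $\xi_v$ by the uniqueness for the K-semistable cone (Step 6, via a Newton--Okounkov body convexity argument), and finally recovers $v'$ from $\xi_v$ through $w({\bf in}(f))\ge v'(f)$ and $\vol(w)=\vol(v')$. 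This chain is not a cosmetic repackaging of ``strict convexity on a common Reeb cone'' --- it is the actual mechanism that makes (b) work without a finite generation hypothesis on $v'$.
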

 
\begin{proof}[Sketch of ideas in the Proofs of Theorem \ref{t-kollar} and \ref{t-high}] The proof consists of a few steps, involving different techniques. 

\medskip

\noindent {\it Step 1}: In this step, we illustrate how Koll\'ar components come into the picture.  
From each ideal $\fa$, we can take a dlt modification of 
$$f\colon (Y,\Delta_Y)\to (X,\Delta+\lct(X,\Delta;\fa)\cdot\fa),$$ where $\Delta_Y=f_*^{-1}\Delta+{\rm Ex}(f)$ and for any component $E_i\subset {\rm Ex}(f)$ we have $$A_{X,\Delta}(E)=\lct(X,\Delta;\fa)\cdot \mult_{E}f^*\fa.$$ 
There is a natural inclusion $\D(\Delta_Y)\subset \Val^{=1}_{X,x}$, and using a similar argument as in \cite{LX14}, we can show that there exists a Koll\'ar component $S$ whose rescaling in $\Val^{=1}_{X,x}$ contained in $\D(\Delta_Y)$ satisfies that 
$$\hvol(\ord_S)= \vol^{\rm loc}(-A_{X,\Delta}(S)\cdot S)\le \vol^{\rm loc}(-K_Y-\Delta_Y)\le \mult(\fa)\cdot \lct^n(X,\Delta;\fa).$$
Then \eqref{Liu} implies that
$$\vol(x,X)=\inf\{\hvol(\ord_S)|\  S \mbox{ is a Koll\'ar component}\}.$$
We can also show that if a minimiser is a divisor then it is indeed a Koll\'ar component (this is proved independently in \cite{Blu16}). 

Moreover, if $x\in (X,\Delta)$ admits a torus group $T$-action, then by degenerating to the initial ideals, as the colengths are preserved and the log canonical thresholds  may only decrease, the right hand side of \eqref{Liu} can be replaced by all $T$-equivariant ideals. Moreover, equivariant MMP allows us to make all the above data $Y$ and $S$ $T$-equivariant.

\medskip
\noindent {\it Step 2}:  In this step, we show that if a minimiser $v$ is quasi-monomial such that $R_0={\rm gr}_v(R)$ is finitely generated, then the degeneration pair $(X_0=_{\rm defn} {\rm Spec}(R_0),D_0)$ is klt. After Step 1, this is easy in the case of Theorem \ref{t-kollar}, as the Koll\'ar component is klt. To treat the higher rank case in Theorem \ref{t-high}, we verify two ingredients: first we show that a rescaling of $\ord_{S_i}$ for the approximating sequence of $S_i$ in Step 1 can be all chosen in the dual complex of a fixed model considered as a subspace of  $ \Val^{=1}_{X,x}$; then we show as ${\rm gr}_v(R)$ is finitely generated, for any $i$ sufficiently large, ${\rm gr}_v(R)\cong {\rm gr}_{\ord_{S_i}}(R)$. This immediately implies that $(X_0,D_0)$ is the same as the corresponding cone $C(S_0,\Delta_{S_0})$ over the Koll\'ar component $S_0$, and then we conclude it is klt as before. 

\medskip
\noindent {\it Step 3}: To proceed we need to establish properties of a general log Fano cone $(X_0,\Delta_0,\xi_v)$ and show that the  corresponding valuation $v$ is a minimiser if and only if $(X_0,\Delta_0,\xi_v)$ is K-semistable. First assume $(X_0,\Delta_0,\xi_v)$ is K-semistable, then by Step 1, it suffices to show that for any $T$-equivariant Koll\'ar component $S$, $\hvol(\ord_{S})\ge \hvol(v)$. In fact, for any such $S$, it induces a special degeneration of $(X_0,\Delta_0,\xi_v)$ to $(Y,\Delta_Y, \xi_Y)$ admitting a $((\bb C^*)^{r}\times \bb C^*)$-action and a new rational vector $\eta_S\in N\oplus \bb Z$ corresponding to the $\bb C^*$-action on the special fiber induced by the degeneration.  Then an observation going back to \cite{MSY} says that 
$$\frac{d\ \hvol(\xi_Y+t\cdot \xi_S)}{dt}={\rm Fut}(Y,\Delta_Y, \xi_Y;\xi_S)\ge 0.$$
Here the generalised Futaki invariant ${\rm Fut}(Y,\Delta_Y, \xi_Y;\xi_S)$ is defined in   \cite[2.2]{CS15}, and then  the last inequality comes from the K-semistability assumption.   It is also first observed in \cite{MSY} that the normalised volume function $\hvol$ is convex on the space of valuations 
$\{\xi_v \ | \ v\in \mathfrak{t}^+_{\bb R} \}.$
Thus by restricting the function  on the ray $\xi_Y+t\cdot\xi_S$ $(t\ge 0)$ and applying the convexity, we conclude that 
$$\hvol_{X_0}(\ord_S)=\lim_{t\to \infty} \hvol_{Y}(\xi_Y+t \cdot \xi_S)\ge \hvol_Y(\xi_Y)=\hvol_{X_0}(\xi_v).$$
 Reversing the argument, one can show that if $v$ is a minimiser of $\hvol$ for a log Fano cone singularity $(X_0,\Delta_0,\xi_v)$, then for any special degeneration with the same notation as above, we have ${\rm Fut}(Y,\Delta_Y, \xi_Y;\xi_S)\ge 0$. 

\medskip

\noindent {\it Step 4:} An consequence of  Step 3 is that for a valuation $v$ on $X$ such that the degeneration $(X_0,\Delta_0,\xi_v)$ is K-semistable,  since the degeneration to the initial ideal argument implies that $\vol(x,X)\ge \vol(o, X_0)$,  then 
$$\hvol_X(v)=\hvol_{X_0}(\xi_v)=\vol(o,X_0)$$ is equal to $\vol(x,X)$. 

\medskip
\noindent {\it Step 5}: Then we proceed to show that if a log Fano cone $(X_0,\Delta_0,\xi_v)$ comes from a degeneration of a minimiser as in Step 2, then it is K-semistable. If not, by Step 3, we can find a degeneration $(Y,\Delta_Y, \xi_{Y})$ induced by an equivariant Koll\'ar component $S$ with $\hvol_Y(\ord_S)<\hvol_Y(\xi_Y)=\hvol_{X_0}(\xi_v)$. Then arguments similar to \cite[Section 5]{And13} show we can construct a degeneration of $(X,\Delta)$ to $(Y,\Delta_Y)$ and a family of valuations $v_t\in \Val_{X,x}$ for $t\in [0,\epsilon]$ (for some $0<\epsilon\ll 1$), with the property that
$$\hvol_X(v_t)=\hvol_Y(\xi_Y+t\cdot\xi_S)<\hvol_Y(\xi_Y)=\hvol_{X_0}(\xi_v)=\hvol_X(v),$$
where for the second inequality, we use again the fact that $\hvol_Y(\xi_Y+t\cdot\xi_S)$ is a convex function. But this is a contradiction.

\medskip
\noindent {\it Step 6}: Now we turn to the uniqueness. In this step, we show this for a K-semistable Fano cone singularity $(X_0,\Delta_0,\xi_v)$. In fact, for any $T$-equivariant valuation $\mu$, we can connect $\xi_v$ and $\mu$ by a path $\mu_t$ such that $\mu_0=\xi_v$ and $\mu_1=\mu$. A Newton-Okounkov body type construction (similar to \cite{KK14}) can interpret the volumes $\hvol(\mu_t)$ to be the volumes of the regions $\Cal A_t$ contained in the convex cone $\Cal C$ cut out by a hyperplane $H_t$ passing through a given vector inside $\Cal C$. Then we conclude by the fact in the convex geometry which says that such a function $f(t)=\vol(\Cal A_t)$ is strictly convex.  Thus it has a unique minimiser, which is $\xi_v$ by Step 3. 

\medskip
\noindent {\it Step 7}: The last step is to prove the uniqueness in general, under the assumption that it admits a degeneration $(X_0,\Delta_0,\xi_v)$ given by a K-semistable minimiser $v$. For another quasi-monomial minimiser $v'$ of rank $r'$, by a combination of the Diophantine approximation and an MMP construction including the application of ACC of log canonical thresholds (see Section \ref{s-acc}),  we can obtain a model $f\colon Z\to X$ which extracts $r'$ divisors $E_i$ ($i=1,...,r'$) such that $(Z, \Delta_Z=_{\rm defn}\sum E_i+f_*^{-1}\Delta)$ is log canonical. Moreover,  the quasi-monomial valuation $v'$ can be computed at the generic point of a component of the intersection of $E_i$, along which $(Z,\Delta_Z)$ is toroidal. 
Then with the help of the MMP, a careful analysis can show  $Z\to X$ degenerates to a birational morphism $Z_0\to X_0$. Moreover, there exists a quasi-monomial valuation  $w$ computed on $Y_0$ which can be considered as a degeneration of $v'$ with 
$$\hvol_{X_0}(w)=\hvol_X(v')=\hvol_X(v)=\hvol_{X_0}(\xi_v).$$ Thus $w=\xi_v$ by Step 5 after a rescaling. Since $w({\bf in}(f))\ge v'(f)$ and $\vol(w)=\vol(v')$, we may argue this implies $\xi_v({\bf in}(f))=v'(f)$. Therefore, $v'$ is uniquely determined by $\xi_v$. 
\end{proof}

Weaker than Theorem \ref{t-kollar},  in Theorem \ref{t-high} we can not show the finite generation of  ${\rm gr}_v(R)$, thus we have to post it as an assumption. This is due to the fact that unlike in the divisorial case where the construction of Koll\'ar component provides a satisfying birational model to understand $\ord_S$, for a  quasi-monomial valuation of higher rank, the auxiliary models (see Step 1 and 6 in the above proof) we construct are less canonical.   Moreover, compared to the statement in Conjecture \ref{conj-local}, it remains wide open to verify that the minimiser is always quasi-monomial. 

\medskip
 
 One of the main applications of Theorem \ref{t-kollar} and \ref{t-high} is to address Donaldson-Sun's conjecture in \cite{DS15} on the algebraicity of the construction of the metric tangent cone, which can be considered as a local analogue of \cite{DS14, Tia13, LWX}.  More precisely, it was proved that the Gromov-Hausdorff limit of a sequence of K\"ahler-Einstein metric Fano varieties is a Fano variety $X_{\infty}$ with klt singularities (see \cite{DS14, Tia13}). And to understand the metric structure near a singularity $x\in X_{\infty}$, we need to understand its  metric tangent cone $C$ (cf. \cite{CCT02}).   In the work \cite{DS15},  a description of $C$ was given by  a two-step degeneration process: first there is a valuation $v$ on $\Val_{X_{\infty},x}$ whose associated graded ring induces a degeneration of $x\in X_{\infty}$ to $o\in M$; then there is a degeneration of Fano cone from $o\in M$ to $o'\in C$. In Donaldson-Sun's definitions of $M$ and $C$, they used the local metric structure around $x\in X_{\infty}$. However, they conjectured that both $M$ and $C$ only depend on the underlying algebraic structure of the germ $x\in X_{\infty}$.
 Built on the previous works of \cite{Li15b, LL16, LX16}, we answer the first part of their conjecture affirmatively, which says $M$ is determined by the algebraic structure of the germ $x\in X_{\infty}$. We achieve this by showing that $v$ is a K-semistable valuation in $\Val_{X,x}$ and such a K-semistable valuation is unique up to rescaling.
 \begin{thm}[\cite{LX17b}]
The valuation $v$ is the unique minimiser (up to scaling) of $\hvol$ in all quasi-monomial valuations in $\Val_{X_{\infty},x}$. 
 \end{thm}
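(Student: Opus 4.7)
The plan is to reduce the statement to the uniqueness part of Theorem~\ref{t-high}. By that theorem, it suffices to show that the Donaldson--Sun valuation $v\in \Val_{X_\infty,x}$ is quasi-monomial with finitely generated associated graded ring, and that the induced degeneration $(X_0,\Delta_0,\xi_v)$ is a K-semistable Fano cone singularity; once $v$ is known to be a K-semistable valuation in the algebraic sense, Theorem~\ref{t-high} simultaneously delivers the minimising property and the uniqueness of $v$ up to rescaling among all quasi-monomial valuations in $\Val_{X_\infty,x}$.

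First I would recall the Donaldson--Sun two-step degeneration $x\in X_\infty \rightsquigarrow o\in M \rightsquigarrow o'\in C$, where the intermediate variety $M$ carries a torus action and $C$ is the metric tangent cone at $x$, equipped with a Ricci-flat K\"ahler cone metric, equivalently a Sasaki--Einstein metric on its link. Building on \cite{Li15b, LL16, LX16}, one identifies the analytically defined $v$ with an algebraically defined quasi-monomial valuation in $\Val_{X_\infty,x}$, verifies that $\operatorname{gr}_v(R)$ is finitely generated with $M=\operatorname{Spec}(\operatorname{gr}_v R)$, and checks that the Reeb vector of the torus action on $M$ coincides with $\xi_v$. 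This is what puts us in the geometric setting where Theorem~\ref{t-high} is applicable at all.

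Next I would promote the differential-geometric stability of $C$ to the algebraic K-semistability of $M$. The existence of a Sasaki--Einstein metric on the link of $C$ implies, via \cite{CS15}, that $C$ is K-polystable as a Fano cone. Since the second-step degeneration $M\rightsquigarrow C$ is an equivariant special degeneration of Fano cones preserving the Reeb vector $\xi_v$, any equivariant Koll\'ar component destabilising $(M,\Delta_0,\xi_v)$ would, through the convexity of $\hvol$ on the Reeb cone \cite{MSY} together with the argument of Step 3 in the proof of Theorem~\ref{t-high}, produce a destabilising test configuration of $(C,\xi_{v})$, contradicting its K-polystability. Hence $(M,\Delta_0,\xi_v)$ is K-semistable and $v$ is a K-semistable valuation on $(X_\infty,x)$.

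The main obstacle is the identification of the analytically defined intermediate variety $M$ with the algebraic $\operatorname{Spec}(\operatorname{gr}_v R)$, and in particular the finite generation of $\operatorname{gr}_v(R)$; both are transcendental inputs that must be extracted from the Gromov--Hausdorff theory and the H\"ormander-type $L^2$-estimates underlying the Donaldson--Sun construction. Once these inputs are in place, Theorem~\ref{t-high}(a)--(b) applies verbatim: part (a) yields that $v$ minimises $\hvol_{X_\infty,x}$, and part (b) forces any other quasi-monomial minimiser to be a rescaling of $v$, completing the proof.
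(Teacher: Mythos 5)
Your proposal follows the same strategy as the paper's proof: verify via the Donaldson--Sun two-step degeneration that $v$ is a K-semistable valuation in the algebraic sense, and then invoke Theorem~\ref{t-high} for both minimality and uniqueness. You correctly identify the transcendental input (identifying $W=\operatorname{Spec}(\operatorname{gr}_v R)$ and finite generation, from DS15) as the crux; the only imprecision is in the passage from K-polystability of $C$ to K-semistability of $W$, which in \cite{LX17b} is run via the normalized-volume comparison along the equivariant degeneration (degeneration to initial ideals gives $\vol(W)\le\vol(C)$, while $\hvol$ at $\xi_v$ is preserved, forcing $\xi_v$ to minimise on $W$, after which the Futaki-sign argument of Step 5 applies) rather than by literally composing the degeneration $W\rightsquigarrow C$ with a hypothetical destabilising test configuration of $W$, since two special degenerations do not naively concatenate.
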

 \begin{proof}From the results proved in \cite{DS15}, we can verify that $o\in (W,\xi_v)$ is a K-semistable Fano cone singularity, which exactly means $v$ is a K-semistable valuation. Thus $v$ is a minimiser of $\hvol$ by the last statement of Theorem \ref{t-high}. Then up to rescaling, $v$ is the unique quasi-monomial minimiser again by Theorem \ref{t-high}. 
 \end{proof}
 
 We expect that the tools we developed, especially those on equivariant K-stability, are enough to solve the second part of Donaldson-Sun's conjecture, i.e. to confirm the metric tangent cone $C$ only depends on the algebraic structure of $x\in X_{\infty}$.

 \subsection{The volume of a klt singularity}
 
As $\vol(x,X)$ carries deep information on the singularity $x\in X$,  calculating this number consists an important part of the theory. It also has applications to global questions. We discuss some related results and questions in this section. 

In general, it could be difficult to compute $\vol(x,X)$. Even for the smooth point $x\in \mathbb{C}^n$, knowing $\vol(x,\mathbb{C}^n)$ (which is, not surprisingly, equal to $n^n$) involves highly nontrivial arguments. An illuminating example is the following.
\begin{exmp}[\cite{Li15b,LL16, LX16}] A $\bb Q$-Fano variety is K-semistable if and only if for the cone $C=C(X,-rK_X)$, the canonical valuation $v$ obtained by blowing up the vertex $o$ is a minimiser. 
\end{exmp}
On one hand, this means that finding out the minimiser is in general  at least as hard as testing the K-semistablity of (one dimensional lower) Fano varieties, which has been known to be a challenging question; on the other hand, this sheds new light on the question of testing K-stability. For example, using properties of degenerating ideals to their initials, we can prove that for a klt Fano variety $X$ with a torus group $T$-action, to test the K-semistability of $X$ it suffices to test on $T$-equivariant special test configurations (see \cite{LX16}). 

\medskip

The Stable Degeneration Conjecture \ref{conj-local} implies many properties of $\vol(x,X)$. The first one we want to discuss  is a finite degree multiplication formula. 
\begin{conj}\label{conj-finite}
If $\pi\colon x_1\in (X_1,\Delta_1)\to x_2\in (X_2,\Delta_2)$ is a finite dominant morphism between klt singularities such that $\pi^*(K_{X_2}+\Delta_2)=K_{X_1}+\Delta_1$, then 
$$\deg(\pi)\cdot \vol(x_2, X_2)=\vol(x_1, X_1).$$
\end{conj}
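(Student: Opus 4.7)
The plan is to combine Liu's characterisation \eqref{Liu} with Blum's existence theorem for minimisers to prove the two matching inequalities. The key preliminary ingredient is a transformation formula under finite crepant pullback: for any $v_1\in \Val_{X_1,x_1}$ with restriction $v_2:=v_1|_{K(X_2)}\in \Val_{X_2,x_2}$, the hypothesis $\pi^*(K_{X_2}+\Delta_2)=K_{X_1}+\Delta_1$ forces $A_{X_1,\Delta_1}(v_1)=A_{X_2,\Delta_2}(v_2)$. This is classical for divisorial valuations via the ramification formula: if a divisor $E_1\subset Y_1$ maps to $E_2\subset Y_2$ with ramification index $e$, then $\ord_{E_1}|_{K(X_2)}=e\cdot\ord_{E_2}$ and $A(E_1,X_1,\Delta_1)=e\cdot A(E_2,X_2,\Delta_2)$. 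The general case follows by extending $A$ from divisorial to quasi-monomial and then to arbitrary valuations via the Jonsson--Musta\c{t}\u{a} approximation theorem.

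For the inequality $\vol(x_1,X_1)\le \deg(\pi)\cdot \vol(x_2,X_2)$, I would take any $\mathfrak{m}_{x_2}$-primary ideal $\mathfrak{a}_2\subset \mathcal{O}_{X_2,x_2}$ and pull back to $\mathfrak{a}_1:=\mathfrak{a}_2\cdot \mathcal{O}_{X_1,x_1}$. The multiplicity formula for finite dominant morphisms of normal local domains gives $\mult(\mathfrak{a}_1)=\deg(\pi)\cdot\mult(\mathfrak{a}_2)$, and the transformation identity above together with $v(\mathfrak{a}_1)=v|_{K(X_2)}(\mathfrak{a}_2)$ for every valuation $v$ on $K(X_1)$ yields $\lct(X_1,\Delta_1;\mathfrak{a}_1)=\lct(X_2,\Delta_2;\mathfrak{a}_2)$. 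Plugging into \eqref{Liu} and taking the infimum over $\mathfrak{a}_2$ gives the inequality.

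For the reverse inequality, I would first reduce to the Galois case by passing to the Galois closure $\tilde{\pi}\colon\tilde{X}\to X_2$ with group $G$: applying the Galois case to $\tilde{X}\to X_2$ and to $\tilde{X}\to X_1$ (with subgroup $H\subset G$) gives the general result. In the Galois setting, use Blum's theorem to produce a minimiser $v_1\in\Val_{X_1,x_1}$. Since $G$ permutes minimisers and preserves $\hvol_{X_1,\Delta_1}$, either by averaging or by invoking the uniqueness of Theorem \ref{t-high} (when the associated graded ring is finitely generated), one arranges $v_1$ to be $G$-invariant, so that it descends to $v_2:=v_1|_{K(X_2)}\in \Val_{X_2,x_2}$. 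The $G$-invariance makes the valuation ideals $\mathfrak{a}_k^{(1)}:=\{f\in \mathcal{O}_{X_1,x_1} : v_1(f)\ge k\}$ $G$-stable; in characteristic zero, the Reynolds operator applied to the graded pieces $\mathfrak{a}_k^{(1)}/\mathfrak{a}_{k+1}^{(1)}$ combined with the rank-$\deg(\pi)$ structure of $\mathcal{O}_{X_1,x_1}$ over $\mathcal{O}_{X_2,x_2}$ should yield the asymptotic equality $\vol(v_1)=\deg(\pi)\cdot \vol(v_2)$. Together with $A(v_1)=A(v_2)$ this gives $\hvol(v_1)=\deg(\pi)\cdot \hvol(v_2)$, and since $v_1$ is a minimiser we conclude $\vol(x_1,X_1)=\hvol(v_1)\ge \deg(\pi)\cdot \vol(x_2,X_2)$.

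The main obstacle is the reverse inequality. Producing a genuinely $G$-invariant minimiser relies on the uniqueness up to rescaling predicted by Conjecture \ref{conj-local}, which is currently known only under a finite generation hypothesis (Theorem \ref{t-high}); in general, one would have to average approximate minimisers and rely on suitable lower semi-continuity of $\hvol$ on the space of valuations. Moreover, identifying $\mathfrak{a}_k^{(1)}$ asymptotically with the pullback of its $G$-invariant part is delicate in the ramified setting and appears to require either integral-closure estimates or a direct analysis of the isotypical decomposition of $\mathcal{O}_{X_1,x_1}$ under $G$. Both difficulties should be accessible using the equivariant machinery developed in Step 1 of the proofs of Theorems \ref{t-kollar} and \ref{t-high}, though making the whole chain rigorous will be the substantive technical step.
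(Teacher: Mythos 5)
This statement is a \emph{conjecture} in the paper, not a theorem: the paper supplies only a two-sentence reduction, namely passing to the Galois closure and noting that $G$-equivariance of the minimiser follows from the uniqueness asserted in the Stable Degeneration Conjecture~\ref{conj-local}. Your plan for the hard inequality $\vol(x_1,X_1)\ge\deg(\pi)\cdot\vol(x_2,X_2)$ is exactly this reduction, and you correctly flag that it currently rests on the conjectural (or finite-generation-conditional) uniqueness of Theorem~\ref{t-high}. Your unconditional argument for the other inequality, pulling back $\mathfrak m_{x_2}$-primary ideals and combining the multiplicity formula $\mult(\mathfrak a_2\mathcal O_{X_1})=\deg(\pi)\mult(\mathfrak a_2)$ with crepant invariance of $\lct$ in Liu's characterisation~\eqref{Liu}, is correct and is a worthwhile refinement that the paper's remark does not spell out (one caveat worth stating explicitly: for the multiplicity formula one needs $x_1$ to be the unique point of $\pi^{-1}(x_2)$, i.e.\ $\pi$ finite as a map of germs, and the $\vol(v_1)=\deg(\pi)\vol(v_2)$ identity in your reverse direction does need a genuine argument, since $\mathfrak a_k^{(2)}\mathcal O_{X_1}$ need only sit inside $\mathfrak a_k^{(1)}$ up to integral closure).
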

This can be easily reduced to the case that the finite covering $X_1\to X_2$ is Galois, and we denote the Galois group by $G$. Then it suffices to show that the minimiser of $X_1$ is $G$-equivariant, which is implied by the uniqueness claim in Conjecture \ref{conj-local}. Conjecture \ref{conj-finite} is verified in \cite{LX17b} for $x\in X_{\infty}$ where $X_{\infty}$ is a Gromov-Hausdorff limit of K\"ahler-Einstein Fano manifolds. Since any point will have its volume less or equal to $n^n$ (see \cite[Appendix]{LX17a}), Conjecture \ref{conj-finite} implies that for a klt singularity $x\in (X,\Delta)$, 
\begin{eqnarray}\label{e-finite}
\vol(x,X)\le n^n/|\hat{\pi}^{\rm loc}_1(x,X)|,
\end{eqnarray}
where the finiteness of $\hat{\pi}^{\rm loc}_1(x,X)$ is proved in \cite{Xu14}. 

\medskip

Combining Conjecture \ref{conj-local} with the well known speculation that K-semistable is a Zariski open condition, we also have the following conjecture.

\begin{conj}\label{conj-LSC}
Given a klt pair $(X,\Delta)$, then the function $\vol(x,X)$ is a constructible function, i.e. we can stratify $X$ into constructible sets $X=\sqcup_i S_i$, such that for any $i$, $\vol(x, X)$ takes a constant value for all $x\in S_i$. 
\end{conj}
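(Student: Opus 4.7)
The plan is to combine the Stable Degeneration Conjecture \ref{conj-local} with a lower semicontinuity statement for the local volume and with a DCC (equivalently, local finiteness) property for the set of attained values, in the local-to-global spirit of the proof of Theorem \ref{t-gacc}. By Conjecture \ref{conj-local}, each closed point $x\in X$ carries an essentially unique K-semistable minimiser $v_x\in \Val_{X,x}$ with its K-semistable Fano cone degeneration $(X_{0,x},\Delta_{0,x},\xi_{v_x})$, and $\vol(x,X)=\vol(o,X_{0,x})$.

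The first main step is to show that $x\mapsto \vol(x,X)$ is lower semicontinuous in the Zariski topology, i.e.\ $\{x\in X:\vol(x,X)\le c\}$ is closed for every $c$. By Liu's formula \eqref{Liu}, this amounts to controlling $\mult(\fa)\cdot\lct(X,\Delta;\fa)^n$ for flat families of primary ideals: multiplicity is upper semicontinuous by standard results, and the constructibility of log canonical thresholds in flat families of ideals is in the spirit of the machinery of \cite{HMX14}. Testing against algebraic families of approximating ideals constructed as in Step~1 of the proof of Theorem \ref{t-high} then yields the desired semicontinuity.

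The second main step is to show that the set $V=\{\vol(x,X)\mid x\in X\}$ is finite; since $V\subset (0,n^n]$ by \cite[Appendix]{LX17a}, this reduces to the DCC property of $V$. One plausible route uses the Stable Degeneration Conjecture to identify $V$ with a subset of the set of volumes of $n$-dimensional K-semistable Fano cone singularities $(X_0,\Delta_0,\xi)$ whose boundary coefficients lie in a fixed DCC set; a local analogue of Theorem \ref{t-bound}(1) should then force DCC. A second, more direct route, in the spirit of Section \ref{s-acc}, proceeds by extracting from any strictly decreasing sequence $\vol(x_i,X)$ a sequence of $\mathfrak{m}_{x_i}$-primary ideals $\fa_i$, then passing to $T$-equivariant degenerations of these data to log Fano cones, and finally deriving a contradiction with the ACC of log canonical thresholds (Theorem \ref{t-ACC}) combined with the boundedness input underlying Theorem \ref{t-bound}. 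Combining the two steps, the level sets $\{x:\vol(x,X)=v\}$ for the finitely many values $v\in V$ are locally closed differences of Zariski-closed subsets, giving the constructible decomposition.

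The hardest part is clearly the DCC input: it is the local counterpart of the deep global DCC in Theorem \ref{t-bound}(1), and each route above requires substantial new ingredients beyond Conjecture \ref{conj-local}, namely either a boundedness theorem for K-semistable Fano cone singularities with volume bounded below, or an equivariant refinement of the methods of Section \ref{s-acc} adapted to the local setting. Consequently, making the argument rigorous appears to be a major undertaking on a par with the global ACC and boundedness results themselves, which is consistent with the fact that constructibility of $\vol$ is presented here as a conjecture rather than a theorem.
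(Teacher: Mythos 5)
This is stated in the paper as a \emph{conjecture}, not a theorem, and the paper offers no proof --- only a one-line heuristic: combine the Stable Degeneration Conjecture (Conjecture~\ref{conj-local}) with the expectation that K-semistability is a Zariski open condition, so that the K-semistable Fano cone degeneration type should be constant on constructible strata. (The paper separately remarks that a degeneration argument of Liu should give lower semicontinuity.) Your proposal reaches for the same conclusion via a genuinely different decomposition: Zariski lower semicontinuity of $\vol(x,X)$ plus finiteness of the value set $V=\{\vol(x,X):x\in X\}$. The first ingredient matches the paper's remark about \cite{Liu17}; the second replaces the openness-of-K-semistability heuristic by a DCC/boundedness argument in the style of Section~\ref{s-acc}. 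That is a legitimate alternative plan, but it is not the route the paper sketches, and it shifts all the difficulty into the finiteness claim.

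There is, however, a concrete logical gap in your finiteness step. You write that since $V\subset (0,n^n]$, finiteness ``reduces to the DCC property of $V$.'' This is false: a bounded subset of $\mathbb{R}$ may satisfy DCC and still be infinite (e.g.\ $\{n^n-1/k : k\ge 1\}$ has no infinite strictly decreasing subsequence but accumulates from below). DCC alone is exactly the wrong chain condition here; to force finiteness of a bounded set of reals one needs discreteness, which amounts to ACC \emph{and} DCC simultaneously, or some other input such as a Noetherian-induction argument showing $\vol$ is generically constant on every irreducible closed subset. This is not cosmetic: with only lower semicontinuity and DCC one can, on an irreducible $Y$, repeatedly peel off the closed locus where the minimum is attained and obtain an infinite strictly increasing chain of attained values with no contradiction, so constructibility does not follow. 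The same objection applies to your proposed ``local analogue of Theorem~\ref{t-bound}(1)'': Theorem~\ref{t-bound}(1) is itself a DCC statement, not a finiteness statement, and it does not yield finiteness of the volume spectrum without additional boundedness.

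If you want to keep your two-step structure, the second step should be either (i) a boundedness theorem for K-semistable Fano cone singularities with volume bounded from below, yielding finiteness directly rather than by DCC, or (ii) the paper's own route: Conjecture~\ref{conj-local} plus Zariski openness of K-semistability in families, which would give a constructible stratification of $X$ by the degeneration type without ever needing to control the value set $V$ abstractly.
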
 

A degeneration argument in \cite{Liu17} implies that the volume function should be lower semi-continuous. A special case we know is that the volume of any $n$-dimensional klt non-smooth point is always less than $n^n$ (see \cite[Appendix]{LX17a}).

\medskip

Finally, we discuss some applications of the volume of singularities  to K-stability of Fano varieties. A useful formula connecting local and global geometries is the following.
\begin{thm}[\cite{Fuj15,Liu16}]If $X$ is a K-semistable $\bb Q$-Fano variety, then for any point $x\in X$,
we have 
\begin{eqnarray}\label{e-upper}
\vol(x, X)\ge (\frac{n+1}{n})^n(-K_X)^n.
\end{eqnarray}
\end{thm}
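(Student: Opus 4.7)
The plan is to combine Liu's ideal characterisation \eqref{Liu} of $\vol(x,X)$ with the valuative criterion for K-semistability and a sharp Okounkov-body volume estimate, reducing the theorem to a computation for divisorial valuations and passing back via \eqref{Liu}.

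By \eqref{Liu} it suffices to prove, for every $\mathfrak{m}_x$-primary ideal $\fa\subset\MO_{X,x}$, the pointwise bound $\mult(\fa)\cdot\lct(X;\fa)^n \ge (\tfrac{n+1}{n})^n\,(-K_X)^n$, and then take the infimum over $\fa$. Write $V:=(-K_X)^n$. Via the graded sequence of valuation ideals of an $\lct$-computing prime divisor over $x$, this reduces to the divisorial statement
\[
A_X(E)^n\cdot\vol(\ord_E) \;\ge\; \left(\tfrac{n+1}{n}\right)^n V
\]
for every prime divisor $E$ over $X$ whose centre contains $x$.

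Fujita's valuative criterion translates K-semistability of $X$ into the inequality $A_X(E)\ge S(E)$ for every such $E$, where
\[
S(E):=\tfrac{1}{V}\int_0^{\tau(E)}\vol(-K_X-tE)\,dt,\qquad \tau(E):=\sup\{t>0:-K_X-tE\text{ is big}\}.
\]
The remaining ingredient is a sharp Brunn--Minkowski-type bound for this integral in terms of $V$ and $\vol(\ord_E)$: interpreting the left-hand side via a Newton--Okounkov body adapted to a flag refining $E$, the integral equals $V$ times the first-coordinate centroid of the Okounkov body $\Delta$ of $-K_X$; because $E$ is centred above a point, $\Delta$ compares favourably to a standard simplex of volume $V/n!$ and height $V^{1/n}/\vol(\ord_E)^{1/n}$ whose centroid realises the sharp constant. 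Substituting into $A_X(E)\ge S(E)$ and rearranging yields the divisorial inequality, and \eqref{Liu} then delivers the theorem.

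The principal obstacle is this sharp volume estimate: a naive concavity application to the function $t\mapsto\vol(-K_X-tE)^{1/n}$ (joining its endpoints by a line) only extracts a factor of order $1/(n+1)$, whereas the sharp constant requires the refined Okounkov-body input that for valuations centred above a point the body genuinely sits over a full-dimensional simplex whose first-coordinate centroid attains the claimed ratio. Once this is in hand, the divisorial-to-ideal passage via \eqref{Liu} is routine and equality is witnessed on $(\bb P^n, x)$ with $E$ the exceptional divisor of the blow-up at $x$, where the Okounkov body is a standard $(n+1)$-simplex and every inequality above becomes an equality.
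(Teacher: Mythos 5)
The paper states this theorem as a citation from \cite{Fuj15,Liu16} without giving a proof, so there is no in-paper argument to compare against; your proposal must be judged on its own.

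First, a correction to the statement itself as printed: the constant should be $\bigl(\tfrac{n}{n+1}\bigr)^n$, not $\bigl(\tfrac{n+1}{n}\bigr)^n$. Checking on $X=\mathbb{P}^n$ with $x$ smooth, where $\vol(x,\mathbb{P}^n)=n^n$ and $(-K_{\mathbb{P}^n})^n=(n+1)^n$, the printed inequality would require $n^n\ge (n+1)^{2n}/n^n$, which is false; with $\bigl(\tfrac{n}{n+1}\bigr)^n$ it becomes an equality, which matches your own assertion that $\mathbb{P}^n$ is extremal. You inherit the typo, and your equality claim is consistent only with the corrected constant.

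Second, your overall architecture is sound and is in fact the correct one: reduce via \eqref{Liu} and graded valuation ideals to a divisorial statement $A_X(E)^n\vol(\ord_E)\ge\bigl(\tfrac{n}{n+1}\bigr)^nV$ for $E$ centred at $x$, and combine Fujita's valuative criterion $A_X(E)\ge S(E)$ with a sharp lower bound on $S(E)$ in terms of $V$ and $\vol(\ord_E)$.

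Third, and this is the genuine gap, you do not supply the sharp estimate and the Okounkov-body heuristic you sketch does not constitute an argument. You assert that \emph{because $E$ is centred above a point}, the Okounkov body ``compares favourably to a standard simplex'', but you give no mechanism by which centredness at a point controls the body, and your literal description (a simplex of height $T=(V/\vol(\ord_E))^{1/n}$, whose $x_1$-centroid is $T/(n+1)$) gives the wrong centroid direction: the extremal profile has mass growing like $t^{n-1}$ away from $t=0$ and centroid $\tfrac{n}{n+1}T$, not $\tfrac{1}{n+1}T$. The crucial missing input is elementary and does not require Okounkov bodies at all. Trivialising $-K_X$ near $x$ yields, for each $m$ and $t$, a linear injection
$$H^0\bigl(X,m(-K_X)\bigr)\big/\mathcal{F}^{\lceil mt\rceil}H^0\bigl(X,m(-K_X)\bigr)\hookrightarrow \MO_{X,x}/\fa_{\lceil mt\rceil},$$
where $\fa_\bullet$ is the valuation-ideal sequence of $\ord_E$, since the kernel of restriction to $\lceil mt\rceil$-jets along $E$ is exactly $\mathcal{F}^{\lceil mt\rceil}H^0$. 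Passing to $m\to\infty$ gives
$$\vol(-K_X)-\vol(-K_X-tE)\le t^n\vol(\ord_E)\qquad\text{for all }t>0.$$
Integrating $\vol(-K_X-tE)\ge V-t^n W$ (with $W=\vol(\ord_E)$) over $[0,(V/W)^{1/n}]$ yields $\int_0^{\tau(E)}\vol(-K_X-tE)\,dt\ge\tfrac{n}{n+1}V(V/W)^{1/n}$, hence $S(E)\ge\tfrac{n}{n+1}(V/W)^{1/n}$, and raising to the $n$-th power gives the corrected divisorial bound. Your diagnosis that naive concavity of $\vol^{1/n}$ only yields $(n+1)^{-n}$ is correct, but the fix is this local-to-global section estimate, not a refined convex-geometry input; since you never isolate it, the argument as written does not close.
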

So if  we can bound the type of klt singularities from the lower bound of their volumes, then we can restrict the type of singularities appearing on a K-semistable $\mathbb{Q}$-Fano variety with a given volume. In particular,   this applies to the Gromov-Hausdorff limit $X_{\infty}$ of a sequence of K\"ahler-Einstein Fano manifolds $X_i$ (with a large volume of $-K_{X_i}$). If the restriction is sufficiently effective, then $X_{\infty}$ would appear in an explicit simple ambient space on which we can carry out the orbital geometry calculation to identify $X_{\infty}$ by showing all other possible limits are K-unstable.

 For instance,  by revisiting the classification  results  of three dimensional singularities, we show that $\vol(x,X)\le 16$ if $x\in X$ is singular and the equality holds if and only if $x\in X$ is a rational double point (see \cite{LX17a}).  As a consequence, we could solve the question on the existence of K\"ahler-Einstein metrics for cubic threefolds. 
\begin{cor}[\cite{LX17a}]GIT polystable (resp. semistable) cubic threefolds are K-polystable (resp. K-semistable). In particular, all GIT polystable cubic threefolds, including every smooth one, admit K\"ahler-Einstein metrics. 
\end{cor}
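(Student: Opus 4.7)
The plan is to identify the K-moduli space of cubic threefolds with the classical GIT moduli space. The central mechanism is that the local volume bound \eqref{e-upper} together with the estimate $\vol(x,X)\le 16$ for three-dimensional singular points from \cite{LX17a} forces every K-polystable degeneration of smooth cubic threefolds to have only rational double point singularities, and hence to itself be a cubic hypersurface in $\mathbb{P}^4$.

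I would proceed in three steps. First, one checks via a direct Fujita--Odaka $\delta$-invariant computation that every smooth cubic threefold is K-stable, so smooth cubics admit K\"ahler--Einstein metrics and populate the K-moduli. Second, given any K-polystable $\bb Q$-Fano degeneration $X_\infty$ of smooth cubic threefolds, one has $(-K_{X_\infty})^3=24$ and $X_\infty$ is klt; applying \eqref{e-upper} yields a lower bound for $\vol(x,X_\infty)$ at every point $x\in X_\infty$, while \cite{LX17a} gives the upper bound $\vol(x,X_\infty)\le 16$ at each singular point, with equality forcing $x$ to be a rational double point. Combining these bounds shows every singularity of $X_\infty$ is a rational double point. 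A standard Cartier-index and base-point-freeness argument then shows that $|{-\tfrac12 K_{X_\infty}}|$ embeds $X_\infty$ as a cubic hypersurface in $\mathbb{P}^4$. Third, invoking the projectivity of the K-moduli of smoothable $\bb Q$-Fano threefolds (\cite{LWX} and its extensions), one obtains a morphism from the K-moduli to the GIT moduli of cubic hypersurfaces that is an isomorphism on the open dense locus of smooth cubics and extends to an isomorphism of projective schemes by the properness and separatedness of both sides. Translating the identification, GIT polystability of a cubic threefold is equivalent to K-polystability, and GIT semistability to K-semistability.

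The main obstacle lies at the GIT boundary: for each strictly polystable orbit (for instance the chordal cubic) one must check K-polystability directly, typically by an equivariant K-stability computation with the natural torus action using the generalised Futaki invariant of \cite{CS15}, drawing on the machinery developed in \cite{LX16, LX17b}. Conversely, one must rule out any K-polystable $\bb Q$-Fano threefold of anticanonical volume $24$ that fails to be a cubic hypersurface, which again uses the volume restriction to control the possible singular strata. Once the moduli identification is in hand, the K\"ahler--Einstein existence statement follows from the Chen--Donaldson--Sun / Tian theorem for K-polystable smoothable $\bb Q$-Fano varieties.
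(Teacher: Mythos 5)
Your overall strategy---compare the K-moduli of smoothable cubic threefolds with the GIT moduli, using normalized volumes of singularities to constrain K-semistable degenerations, and reduce the GIT boundary to a finite equivariant check---is the right one and is indeed the route taken in \cite{LX17a}. But the central step, ``Combining these bounds shows every singularity of $X_\infty$ is a rational double point,'' does not follow from the two inequalities you quote. The Fujita--Liu bound (note the survey's display \eqref{e-upper} has a typo; the correct form is $\vol(x,X)\ge (\tfrac{n}{n+1})^n(-K_X)^n$) gives $\vol(x,X_\infty)\ge \tfrac{81}{8}$ for a K-semistable degeneration with $(-K)^3=24$, while the singularity bound gives $\vol(x,X_\infty)\le 16$ if $x$ is singular. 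Since $\tfrac{81}{8}<16$, this sandwich forces nothing: there are genuinely klt, non-RDP $3$-fold singularities whose normalized volume sits strictly inside $(\tfrac{81}{8},16)$---for instance the terminal cyclic quotient $\tfrac12(1,1,1)$ has $\hvol=\tfrac{27}{2}$. So at this point you cannot conclude $X_\infty$ has only rational double points, let alone that $-\tfrac12 K_{X_\infty}$ is Cartier and very ample.

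What is actually needed, and what \cite{LX17a} supplies, is a finer analysis of $3$-fold klt singularities with $\hvol>\tfrac{81}{8}$: one first bounds the Gorenstein (Cartier) index via the index-one quasi-\'etale cover and the multiplicativity of $\hvol$ under such covers (so $r\cdot\tfrac{81}{8}\le \hvol(\text{cover})\le 27$ forces $r\le 2$, and if $r=2$, the cover is smooth, placing strong constraints), and then runs a case-by-case classification of the remaining Gorenstein canonical possibilities to rule out everything but ADE. Only after establishing that $X_\infty$ is Gorenstein canonical with $-K_{X_\infty}$ divisible by $2$ in $\operatorname{Pic}$ can one apply Riemann--Roch and base-point-freeness to realize it as a cubic hypersurface in $\mathbb{P}^4$; you also take the $2$-divisibility for granted. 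The rest of your outline (comparison of proper separated moduli spaces, equivariant K-(poly)stability of the boundary orbits such as the chordal cubic, and appealing to Chen--Donaldson--Sun/Tian for the K\"ahler--Einstein conclusion) is correct, but the decisive singularity-classification input has been elided, and without it the argument does not close.
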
 
%If one can confirm \eqref{e-finite}, then \eqref{e-upper} also implies that any point $x$ on a K-semistable Fano $X$ with bounded volume from below will have bounded size of local fundamental group. 

%\begin{rem}  To draw  further connections of volumes of singularities to questions in MMP, e.g., the termination of flips etc., it would be very interesting to understand how volumes  of singularities change during an MMP sequence. 
%\end{rem}
\begin{bibdiv}
\begin{biblist}%[\normalsize]

\bib{AGV85}{book}{
    AUTHOR = {Arnol\cprime d, V. }
    AUTHOR={Guse\u\i n-Zade, S. } 
    AUTHOR={Varchenko, A. },
     TITLE = {Singularities of differentiable maps. {V}ol. {I}},
    SERIES = {Monographs in Mathematics},
    VOLUME = {82}
 PUBLISHER = {Birkh\"auser Boston, Inc., Boston, MA},
      YEAR = {1985},
     PAGES = {xi+382},
      ISBN = {0-8176-3187-9},
   MRCLASS = {58C27},
  MRNUMBER = {777682},
       DOI = {10.1007/978-1-4612-5154-5},
       URL = {http://dx.doi.org/10.1007/978-1-4612-5154-5},
}

\bib{AKMW02}{article}{
    AUTHOR = {Abramovich, D.}
    AUTHOR={Karu, K.}
    AUTHOR={Matsuki, K.}
    AUTHOR={W\l odarczyk, J.},
     TITLE = {Torification and factorization of birational maps},
   JOURNAL = {J. Amer. Math. Soc.},
  FJOURNAL = {Journal of the American Mathematical Society},
    VOLUME = {15},
      YEAR = {2002},
    NUMBER = {3},
     PAGES = {531--572},
 }

\bib{Ale94}{article}{
    AUTHOR = {Alexeev, V.},
     TITLE = {Boundedness and {$K^2$} for log surfaces},
   JOURNAL = {Internat. J. Math.},
  FJOURNAL = {International Journal of Mathematics},
    VOLUME = {5},
      YEAR = {1994},
    NUMBER = {6},
     PAGES = {779--810},
}

\bib{And13}{article}{
    AUTHOR = {Anderson, D.},
     TITLE = {Okounkov bodies and toric degenerations},
   JOURNAL = {Math. Ann.},
    VOLUME = {356},
      YEAR = {2013},
    NUMBER = {3},
     PAGES = {1183--1202},
   }
   
\bib{AS95}{article}{
    AUTHOR = {Angehrn, U.}
    AUTHOR={Siu, Y.},
     TITLE = {Effective freeness and point separation for adjoint bundles},
   JOURNAL = {Invent. Math.},
  FJOURNAL = {Inventiones Mathematicae},
    VOLUME = {122},
      YEAR = {1995},
    NUMBER = {2},
     PAGES = {291--308},
   }

\bib{BCHM10}{article}{
   author={Birkar, C.},
  author={Cascini, P.},
  author={Hacon, C.},
  author={McKernan, J.},
 title={Existence of minimal models for varieties of log general type},
  journal={J. Amer. Math. Soc.},
  volume={23},
   date={2010},
   number={2},
   pages={405--468},
%   issn={0894-0347},
%   review={\MR{2601039 (2011f:14023)}},
%   doi={10.1090/S0894-0347-09-00649-3},
}

\bib{Bir16}{article}{
   author={Birkar, C.},
  title={Singularities of linear systems and boundedness of Fano varieties},
  journal={ arXiv:1609.05543},
   date={2016},
  %   issn={0894-0347},
%   review={\MR{2601039 (2011f:14023)}},
%   doi={10.1090/S0894-0347-09-00649-3},
}

\bib{Blu16}{article}{
   author={Blum, H.},
  title={Existence of Valuations with Smallest Normalized Volume},
  journal={To appear in Compos. Math., arXiv:1606.08894},
   date={2016},
  %   issn={0894-0347},
%   review={\MR{2601039 (2011f:14023)}},
%   doi={10.1090/S0894-0347-09-00649-3},
}

\bib{CCT02}{article}{
    AUTHOR = {Cheeger, J. }
    AUTHOR={Colding, T.}
    AUTHOR={Tian, G.},
     TITLE = {On the singularities of spaces with bounded {R}icci curvature},
   JOURNAL = {Geom. Funct. Anal.},
  FJOURNAL = {Geometric and Functional Analysis},
    VOLUME = {12},
      YEAR = {2002},
    NUMBER = {5},
     PAGES = {873--914},}

\bib{CDS}{article}{
    AUTHOR = {Chen, X.},
    AUTHOR={Donaldson, S.} 
    AUTHOR={Sun, S.},
     TITLE = {K\"ahler-{E}instein metrics on {F}ano manifolds. {I}:
              {A}pproximation of metrics with cone singularities, II: Limits with cone angle less than $2\pi$, III: Limits as cone angle approaches $2\pi$ and completion of the main proof.}
   JOURNAL = {J. Amer. Math. Soc.},
  FJOURNAL = {Journal of the American Mathematical Society},
    VOLUME = {28},
      YEAR = {2015},
    NUMBER = {1},
     PAGES = {183--197,199--234,235--278},
 }

\bib{CS15}{article}{
    AUTHOR = {Collins, T.},
    AUTHOR={Sz\'ekelyhidi, G.} 
     TITLE = {Sasaki-Einstein metrics and K-stability }
   JOURNAL = {arXiv: 1512.07213},
      YEAR = {2015}
       }

\bib{dFKX}{incollection}{
   author={de Fernex, T.},
     author={Koll\'ar, J.},
       author={Xu, C.},
   title={The dual complex of singularities},
 number={74},
   year={2017},
 %  note={arXiv:1212.1675},
 booktitle={Higher dimensional algebraic geometry--in honour of Professor Yujiro Kawamata's sixtieth birthday},
   series={Advanced Studies in Pure Mathematics},
   publisher={Math. Soc. Japan, Tokyo}
   pages={103-130},
%   issn={0073-8301},
%   review={\MR{0498551 (58 \#16653a)}},
}

\bib{dFEM}{article}{
    AUTHOR = {de Fernex, T.}
    AUTHOR={ Ein, L.}
    AUTHOR={Musta\c t\u a, M.},
     TITLE = {Shokurov's {ACC} conjecture for log canonical thresholds on
              smooth varieties},
   JOURNAL = {Duke Math. J.},
  FJOURNAL = {Duke Mathematical Journal},
    VOLUME = {152},
      YEAR = {2010},
    NUMBER = {1},
     PAGES = {93--114},
   }

\bib{DL01}{incollection}{
  author={Denef, L.},
       author={Loeser, J.},
   title={Geometry on arc spaces of algebraic varieties},
   BOOKTITLE = {European Congress of Mathematics, Vol. I  (Barcelona, 2000)},
  SERIES={Progr. Math.}
  VOLUME={201}
  PAGES={327-348}
  PUBLISHER={Birkh\"auser, Basel}
  date={2001},
}

\bib{Don01}{article}{
    AUTHOR = {Donaldson, S.},
     TITLE = {Scalar curvature and projective embeddings. {I}},
   JOURNAL = {J. Differential Geom.},
  FJOURNAL = {Journal of Differential Geometry},
    VOLUME = {59},
      YEAR = {2001},
    NUMBER = {3},
     PAGES = {479--522},
}

\bib{DS14}{article}{
    AUTHOR = {Donaldson, S.}
    AUTHOR={Sun, S.},
     TITLE = {Gromov-{H}ausdorff limits of {K}\"ahler manifolds and algebraic
              geometry},
   JOURNAL = {Acta Math.},
  FJOURNAL = {Acta Mathematica},
    VOLUME = {213},
      YEAR = {2014},
    NUMBER = {1},
     PAGES = {63--106},
   }

\bib{DS15}{article}{
    AUTHOR = {Donaldson, S.}
    AUTHOR={Sun, S.},
     TITLE = {Gromov-{H}ausdorff limits of {K}\"ahler manifolds and algebraic
              geometry, II},
    JOURNAL = {J. Differential Geom.},
  FJOURNAL = {Journal of Differential Geometry},
    VOLUME = {107},
      YEAR = {2017},
    NUMBER = {2},
     PAGES = {327--371},
     }

\bib{ELS}{article}{
    AUTHOR = {Ein, L.} 
   AUTHOR={Lazarsfeld, R.} 
   AUTHOR={Smith, K.},
     TITLE = {Uniform approximation of {A}bhyankar valuation ideals in
              smooth function fields},
   JOURNAL = {Amer. J. Math.},
  FJOURNAL = {American Journal of Mathematics},
    VOLUME = {125},
      YEAR = {2003},
    NUMBER = {2},
     PAGES = {409--440},
 }
 
    \bib{Fuj11}{article}{
   author={Fujino, O.},
   title={Semi-stable minimal model program for varieties with trivial
   canonical divisor},
   journal={Proc. Japan Acad. Ser. A Math. Sci.},
   volume={87},
   date={2011},
   number={3},
   pages={25--30},
}
 
    \bib{Fuj15}{article}{
  author={Fujita, K.},
   title={Optimal bounds for the volumes of K\"ahler-Einstein Fano manifolds},
   journal={To appear in Amer. J. Math., arXiv:1508.04578},
  date={2015},
}

    \bib{GS11}{article}{
    AUTHOR = {Gross, M.}
    AUTHOR={ Siebert, B.},
     TITLE = {From real affine geometry to complex geometry},
   JOURNAL = {Ann. of Math. (2)},
    VOLUME = {174},
      YEAR = {2011},
    NUMBER = {3},
     PAGES = {1301--1428},
  }

\bib{HM06}{article}{
    AUTHOR = {Hacon, C.} 
    AUTHOR={McKernan, J.},
     TITLE = {Boundedness of pluricanonical maps of varieties of general
              type},
   JOURNAL = {Invent. Math.},
  FJOURNAL = {Inventiones Mathematicae},
    VOLUME = {166},
      YEAR = {2006},
    NUMBER = {1},
     PAGES = {1--25},
 }

\bib{HMX13}{article}{
  author={Hacon, C.},
       author={McKernan, J.},
   author={Xu, C.},
     TITLE = {On the birational automorphisms of varieties of general type},
   JOURNAL = {Ann. of Math. (2)},
  FJOURNAL = {Annals of Mathematics. Second Series},
    VOLUME = {177},
      YEAR = {2013},
    NUMBER = {3},
     PAGES = {1077--1111},
}

\bib{HMX14}{article}{
  author={Hacon, C.},
       author={McKernan, J.},
   author={Xu, C.},
   title={A{CC} for log canonical thresholds},
   journal={Ann. of Math.},
  date={2014},
   volume={180},
   pages={523-571},
   issue={2}
}

\bib{HMX16}{incollection}{
  author={Hacon, C.},
       author={McKernan, J.},
   author={Xu, C.},
   title={Boundedness of varieties of log general type},
   BOOKTITLE = {To appear in Proceedings of 2015 Summer Research Institute on Algebraic Geometry.},
  
  date={2016},
}

\bib{HX13}{article}{
    AUTHOR = {Hacon, C.}
    AUTHOR={Xu, C.},
     TITLE = {Existence of log canonical closures},
   JOURNAL = {Invent. Math.},
    VOLUME = {192},
      YEAR = {2013},
    NUMBER = {1},
     PAGES = {161--195},
  }

\bib{JM12}{article}{
    AUTHOR = {Jonsson, M.}
    AUTHOR={Musta\c t\u a, M.},
     TITLE = {Valuations and asymptotic invariants for sequences of ideals},
   JOURNAL = {Ann. Inst. Fourier (Grenoble)},
  FJOURNAL = {Universit\'e de Grenoble. Annales de l'Institut Fourier},
    VOLUME = {62},
      YEAR = {2012},
    NUMBER = {6},
     PAGES = {2145--2209},
 }

\bib{KK10}{article}{
    AUTHOR = {Koll\'ar, J.}
    AUTHOR={ Kov\'acs, S.},
     TITLE = {Log canonical singularities are {D}u {B}ois},
   JOURNAL = {J. Amer. Math. Soc.},
  FJOURNAL = {Journal of the American Mathematical Society},
    VOLUME = {23},
      YEAR = {2010},
    NUMBER = {3},
     PAGES = {791--813},
}

\bib{KK14}{article}{
    AUTHOR = {Kaveh, K.} 
    AUTHOR={Khovanskii, A.},
     TITLE = {Convex bodies and multiplicities of ideals},
   JOURNAL = {Proc. Steklov Inst. Math.},
  FJOURNAL = {Proceedings of the Steklov Institute of Mathematics},
    VOLUME = {286},
      YEAR = {2014},
    NUMBER = {1},
     PAGES = {268--284},
 }

\bib{KM98}{book}{
   author={Koll{\'a}r, J.},
   author={Mori, S.},
   title={Birational geometry of algebraic varieties},
   series={Cambridge Tracts in Mathematics},
   volume={134},
   note={With the collaboration of C. H. Clemens and A. Corti;
   Translated from the 1998 Japanese original},
   publisher={Cambridge University Press},
   place={Cambridge},
   date={1998},
   pages={viii+254},
}

\bib{Kol94}{article}{
   author={Koll\'ar, J.},
   title={Log surfaces of general type; some conjectures},
   conference={
      title={Classification of algebraic varieties},
      address={L'Aquila},
      date={1992},
   },
   book={
      series={Contemp. Math.},
      volume={162},
      publisher={Amer. Math. Soc., Providence, RI},
   },
   date={1994},
   pages={261--275},
 }

\bib{Kol95}{incollection}{
    AUTHOR = {Koll\'ar, J.},
     TITLE = {Singularities of pairs},
 BOOKTITLE = {Algebraic geometry---{S}anta {C}ruz 1995},
    SERIES = {Proc. Sympos. Pure Math.},
    VOLUME = {62},
     PAGES = {221--287},
 PUBLISHER = {Amer. Math. Soc., Providence, RI},
      YEAR = {1997},
  }

\bib{Kol13a}{incollection}{
    AUTHOR = {Koll\'ar, J.},
     TITLE = {Moduli of varieties of general type},
 BOOKTITLE = {Handbook of moduli. {V}ol. {II}},
    SERIES = {Adv. Lect. Math. (ALM)},
    VOLUME = {25},
     PAGES = {131--157},
 PUBLISHER = {Int. Press, Somerville, MA},
      YEAR = {2013},
   MRCLASS = {14D20 (14D22)},
  MRNUMBER = {3184176},
MRREVIEWER = {Nicolae Manolache},
}

 \bib{Kol13}{book}{
    AUTHOR = {Koll\'ar, J.},
     TITLE = {Singularities of the minimal model program},
    series= {Cambridge Tracts in Mathematics},
    VOLUME = {200},
      NOTE = {With a collaboration of S{\'a}ndor Kov{\'a}cs},
 PUBLISHER = {Cambridge University Press},
   ADDRESS = {Cambridge},
      YEAR = {2013},
   }
   
  \bib{KS01}{incollection}{
    AUTHOR = {Kontsevich, M.},
    author={Soibelman, Y.},
     TITLE = {Homological mirror symmetry and torus fibrations},
 BOOKTITLE = {Symplectic geometry and mirror symmetry ({S}eoul, 2000)},
     PAGES = {203--263},
 PUBLISHER = {World Sci. Publ., River Edge, NJ},
      YEAR = {2001},
 }

    \bib{KX16}{article}{
    AUTHOR = {Koll{\'a}r, J.},
        AUTHOR = {Xu, C.},
     TITLE = {The dual complex of Calabi--Yau pairs},
  JOURNAL = {Invent. Math.},
  FJOURNAL = {Inventiones Mathematicae},
    VOLUME = {205},
      YEAR = {2016},
    NUMBER = {3},
     PAGES = {527--557},
   }
   
     \bib{Laz04}{book}{
    AUTHOR = {Lazarsfeld, Robert},
     TITLE = {Positivity in algebraic geometry. {II}},
    SERIES = {Ergebnisse der Mathematik und ihrer Grenzgebiete. 3. Folge. A
              Series of Modern Surveys in Mathematics},
    VOLUME = {49},
 PUBLISHER = {Springer-Verlag, Berlin},
      YEAR = {2004},
     PAGES = {xviii+385},
   }
   
   \bib{Li15}{article}{
  author={Li, C.},
   title={Minimizing normalized volumes of valuations},
   journal={To appear in Math. Zeit., arXiv:1511.08164},
  date={2015},
}

   \bib{Li15b}{article}{
  author={Li, C.},
   title={ K-semistability is equivariant volume minimization},
   journal={To appear in Duke Math. J., arXiv:1512.07205},
  date={2015},
}
 \bib{Liu16}{article}{
  author={Liu, Y.},
   title={ The volume of singular K\"ahler-Einstein Fano varieties},
   journal={To appear in Compos. Math., arXiv:1605.01034},
  date={2016},
}

 \bib{Liu17}{article}{
  author={Liu, Y.},
   title={On the semi-continuity problem of normalized volumes of singularities},
   journal={arXiv:1711.06962},
  date={2017},
}

 \bib{LL16}{article}{
  author={Li, C.},
  author={Liu, Y.},
   title={ K\"ahler-Einstein metrics and volume minimization},
   journal={ arXiv:1602.05094},
  date={2016},
}

 \bib{LWX}{article}{
  author={Li, C.},
  author={Wang, X.},
  author={Xu, C.}
   title={ On proper moduli space of smoothable K\"ahler-Einstein Fano varieties},
   journal={  arXiv:1411.0761v3},
  date={2014},
}

% \bib{LWX2}{article}{
%  author={Li, C.},
 % author={Wang, X.},
  %author={Xu, C.}
   %title={ Quasi-projectivity of the moduli space of smooth K\"ahler-Einstein Fano manifolds},
   %journal={ To appear in Ann. Sci. \'Ecole Norm. Sup.,   arXiv:1502.06532},
%  date={2015},
%}     

\bib{LX14}{article}{
    AUTHOR = {Li, C.}
    AUTHOR={Xu, C.},
     TITLE = {Special test configuration and {K}-stability of {F}ano
              varieties},
   JOURNAL = {Ann. of Math. (2)},
  FJOURNAL = {Annals of Mathematics. Second Series},
    VOLUME = {180},
      YEAR = {2014},
    NUMBER = {1},
     PAGES = {197--232},
}

\bib{LX16}{article}{
    AUTHOR = {Li, C.}
    AUTHOR={Xu, C.},
     TITLE = { Stability of Valuations and Koll\'ar Components },
   JOURNAL = {arXiv:1604.05398},
      YEAR = {2016},
}

\bib{LX17b}{article}{
    AUTHOR = {Li, C.}
    AUTHOR={Xu, C.},
     TITLE = { Stability of Valuations: Higher Rank},
   JOURNAL = {arXiv:1707.05561},
      YEAR = {2017},
}

\bib{LX17a}{article}{
    AUTHOR = {Liu, Y.}
    AUTHOR={Xu, C.},
     TITLE = {K-stability of cubic threefolds},
   JOURNAL = { arXiv:1706.01933},
      YEAR = {2017},
}

\bib{Mil68}{book}{
    AUTHOR = {Milnor, J.},
     TITLE = {Singular points of complex hypersurfaces},
    SERIES = {Annals of Mathematics Studies, No. 61},
 PUBLISHER = {Princeton University Press, Princeton, N.J.; University of
              Tokyo Press, Tokyo},
      YEAR = {1968},
     PAGES = {iii+122},
}

\bib{MN15}{article}{
    AUTHOR = {Musta\c t\u a, M.}
    AUTHOR={Nicaise, J.},
     TITLE = {Weight functions on non-{A}rchimedean analytic spaces and the
              {K}ontsevich-{S}oibelman skeleton},
   JOURNAL = {Algebr. Geom.},
  FJOURNAL = {Algebraic Geometry},
    VOLUME = {2},
      YEAR = {2015},
    NUMBER = {3},
     PAGES = {365--404},
}

\bib{MP}{article}{
    AUTHOR = {McKernan, J.} 
    AUTHOR={ Prokhorov, Yuri},
     TITLE = {Threefold thresholds},
   JOURNAL = {Manuscripta Math.},
  FJOURNAL = {Manuscripta Mathematica},
    VOLUME = {114},
      YEAR = {2004},
    NUMBER = {3},
     PAGES = {281--304},
    }
    
\bib{MSY}{article}{
    AUTHOR = {Martelli, D.} 
    AUTHOR={Sparks, J.} 
    AUTHOR={ Yau, S.},
     TITLE = {Sasaki-{E}instein manifolds and volume minimisation},
   JOURNAL = {Comm. Math. Phys.},
  FJOURNAL = {Communications in Mathematical Physics},
    VOLUME = {280},
      YEAR = {2008},
    NUMBER = {3},
     PAGES = {611--673},
  }

\bib{Mum61}{article}{
    AUTHOR = {Mumford, D.},
     TITLE = {The topology of normal singularities of an algebraic surface
              and a criterion for simplicity},
   JOURNAL = {Inst. Hautes \'Etudes Sci. Publ. Math.},
  FJOURNAL = {Institut des Hautes \'Etudes Scientifiques. Publications
              Math\'ematiques},
    NUMBER = {9},
      YEAR = {1961},
     PAGES = {5--22},
}

\bib{NX16a}{article}{
    AUTHOR = {Nicaise, J.} 
    AUTHOR={Xu, C.},
     TITLE = {Poles of maximal order of motivic zeta function},
   JOURNAL = {Duke Math. J.},
    VOLUME = {165},
      YEAR = {2016},
    NUMBER = {2},
     PAGES = { 217-243},
}

\bib{NX16}{article}{
    AUTHOR = {Nicaise, J.} 
    AUTHOR={Xu, C.},
     TITLE = {The essential skeleton of a degeneration of algebraic
              varieties},
   JOURNAL = {Amer. J. Math.},
    VOLUME = {138},
      YEAR = {2016},
    NUMBER = {6},
     PAGES = {1645--1667},
}

\bib{Oda13}{article}{
    AUTHOR = {Odaka, Y.},
     TITLE = {The {GIT} stability of polarized varieties via discrepancy},
   JOURNAL = {Ann. of Math. (2)},
  FJOURNAL = {Annals of Mathematics. Second Series},
    VOLUME = {177},
      YEAR = {2013},
    NUMBER = {2},
     PAGES = {645--661},
 }		

%\bib{Oda15}{article}{
%    AUTHOR = {Odaka, Y.},
 %    TITLE = {Compact moduli spaces of {K}\"ahler-{E}instein {F}ano varieties},
 %  JOURNAL = {Publ. Res. Inst. Math. Sci.},
%  FJOURNAL = {Publications of the Research Institute for Mathematical         Sciences},
    %VOLUME = {51},
      %YEAR = {2015},
    %NUMBER = {3},
   %  PAGES = {549--565},
% }

\bib{Pay13}{article}{
    AUTHOR = {Payne, S.},
     TITLE = {Boundary complexes and weight filtrations},
   JOURNAL = {Michigan Math. J.},
  FJOURNAL = {Michigan Mathematical Journal},
    VOLUME = {62},
      YEAR = {2013},
    NUMBER = {2},
     PAGES = {293--322},
}

%\bib{Pro}{incollection}{
    %AUTHOR = {Prokhorov, Y.},
    % TITLE = {Blow-ups of canonical singularities},
 %BOOKTITLE = {Algebra ({M}oscow, 1998)},
     %PAGES = {301--317},
 %PUBLISHER = {de Gruyter, Berlin},
   %   YEAR = {2000},
% }

\bib{Sho}{article}{
    AUTHOR = {Shokurov, V. V.},
     TITLE = {Three-dimensional log perestroikas},
   JOURNAL = {Izv. Ross. Akad. Nauk Ser. Mat.},
  FJOURNAL = {Rossi\u\i skaya Akademiya Nauk. Izvestiya. Seriya
              Matematicheskaya},
    VOLUME = {56},
      YEAR = {1992},
    NUMBER = {1},
     PAGES = {105--203},
 }

%\bib{SSY16}{article}{
  %  AUTHOR = {Spotti, C.},
   % AUTHOR={Sun, S.}, 
   % AUTHOR={Yao, C.},
    % TITLE = {Existence and deformations of {K}\"ahler-{E}instein metrics on   smoothable {$\Bbb{Q}$}-{F}ano varieties},
 %  JOURNAL = {Duke Math. J.},
 % VOLUME = {165},
   %   YEAR = {2016},
   % NUMBER = {16},
    % PAGES = {3043--3083},
  %}

\bib{SYZ96}{article}{
    AUTHOR = {Strominger, A.}
    AUTHOR={Yau, S.}
    AUTHOR={Zaslow, E.},
     TITLE = {Mirror symmetry is {$T$}-duality},
   JOURNAL = {Nuclear Phys. B},
    VOLUME = {479},
      YEAR = {1996},
    NUMBER = {1-2},
     PAGES = {243--259},
 }

\bib{Tak06}{article}{
    AUTHOR = {Takayama, S.},
     TITLE = {Pluricanonical systems on algebraic varieties of general type},
   JOURNAL = {Invent. Math.},
  FJOURNAL = {Inventiones Mathematicae},
    VOLUME = {165},
      YEAR = {2006},
    NUMBER = {3},
     PAGES = {551--587},
}

\bib{Tei03}{incollection}{
    AUTHOR = {Teissier, B.},
     TITLE = {Valuations, deformations, and toric geometry},
 BOOKTITLE = {Valuation theory and its applications, {V}ol. {II}
              ({S}askatoon, {SK}, 1999)},
    SERIES = {Fields Inst. Commun.},
    VOLUME = {33},
     PAGES = {361--459},
 PUBLISHER = {Amer. Math. Soc., Providence, RI},
      YEAR = {2003},
}

\bib{Tia97}{article}{
    AUTHOR = {Tian, Gang},
     TITLE = {K\"ahler-{E}instein metrics with positive scalar curvature},
   JOURNAL = {Invent. Math.},
  FJOURNAL = {Inventiones Mathematicae},
    VOLUME = {130},
      YEAR = {1997},
    NUMBER = {1},
     PAGES = {1--37},
 }

\bib{Tia13}{article}{
    AUTHOR = {Tian, Gang},
     TITLE = {Partial {$C^0$}-estimate for {K}\"ahler-{E}instein metrics},
   JOURNAL = {Commun. Math. Stat.},
  FJOURNAL = {Communications in Mathematics and Statistics},
    VOLUME = {1},
      YEAR = {2013},
    NUMBER = {2},
     PAGES = {105--113},
   }

\bib{Tia15}{article}{
    AUTHOR = {Tian, Gang},
     TITLE = {K-stability and {K}\"ahler-{E}instein metrics},
   JOURNAL = {Comm. Pure Appl. Math.},
  FJOURNAL = {Communications on Pure and Applied Mathematics},
    VOLUME = {68},
      YEAR = {2015},
    NUMBER = {7},
     PAGES = {1085--1156},
     
 }

\bib{Tsuji}{article}{
    AUTHOR = {Tsuji, H.},
     TITLE = {Pluricanonical systems of projective varieties of general
              type. {II}},
   JOURNAL = {Osaka J. Math.},
  FJOURNAL = {Osaka Journal of Mathematics},
    VOLUME = {44},
      YEAR = {2007},
    NUMBER = {3},
     PAGES = {723--764},
  }

\bib{TX17}{article}{
  AUTHOR = {Tian, Z.},
    AUTHOR = {Xu, C.},
     TITLE = { Finiteness of fundamental groups},
   JOURNAL = {Compos. Math.},
    VOLUME = {153},
      YEAR = {2016},
    NUMBER = {2},
     PAGES = {257-273},
 }

\bib{Xu14}{article}{
  AUTHOR = {Xu, C.},
     TITLE = {Finiteness of algebraic fundamental groups},
   JOURNAL = {Compos. Math.},
  FJOURNAL = {Compositio Mathematica},
    VOLUME = {150},
      YEAR = {2014},
    NUMBER = {3},
     PAGES = {409--414},
}

\end{biblist}
\end{bibdiv}
\bigskip

\end{document}